\newtheorem{theorem}{Theorem}[section]
\newtheorem{corollary}[theorem]{Corollary}
\newtheorem{lemma}[theorem]{Lemma}
\newtheorem{remark}{Remark}
\newenvironment{proof}[1][Proof]{\begin{trivlist}
\item[\hskip \labelsep {\bfseries #1}]}{\end{trivlist}}
\newcommand\nc{\newcommand}
\newcommand{\be}{\begin{equation}}
\newcommand{\ee}{\end{equation}}
\newcommand{\ba}{\begin{aligned}}
\newcommand{\ea}{\end{aligned}}
\newcommand{\bea}{\begin{eqnarray}}
\newcommand{\eea}{\end{eqnarray}}
\newcommand{\bA}{\mathbf{A}}
\newcommand{\bE}{\mathbf{E}}
\newcommand{\bH}{\mathbf{H}}
\newcommand{\bJ}{\mathbf{J}}
\newcommand{\bM}{\mathbf{M}}
\nc\ex{E_x}
\nc\ey{E_y}
\nc\ez{E_z}
\nc\hx{H_x}
\nc\hy{H_y}
\nc\hz{H_z}
\nc\px[1]{\frac{\partial #1}{\partial x}}
\nc\py[1]{\frac{\partial #1}{\partial y}}
\nc\br{{\mathbf{r}}}
\nc\curl{\nabla\times}
\nc\dive{\nabla\cdot}
\nc\pt{\frac{\partial}{\partial t}}
\nc\pet{\frac{\partial \bE}{\partial t}}
\nc\pht{\frac{\partial \bH}{\partial t}}
\title{Second Kind Integral Equation Formulation for the Mode Calculation of
  Optical Waveguides}
\author{Jun Lai
\thanks{Courant Institute of Mathematical Sciences, New York University, New York, New York, 10012
({\tt lai@cims.nyu.edu}).}
\and
Shidong Jiang
\thanks{Department of Mathematical Sciences, New Jersey Institute of
Technology, Newark, New Jersey 07102
({\tt shidong.jiang@njit.edu}).}
}
\begin{document}

\maketitle

\begin{abstract}
We present a second kind integral equation (SKIE) formulation
for calculating the electromagnetic modes of optical waveguides,
where the unknowns are only on material interfaces. The resulting numerical
algorithm can handle optical waveguides with
a large number of inclusions of arbitrary irregular cross section.
It is capable of finding the bound, leaky, and complex modes
for optical fibers and waveguides including photonic crystal fibers (PCF),
dielectric fibers and waveguides. Most importantly, the formulation is well
conditioned even in the case of nonsmooth geometries. Our method is highly accurate
and thus can be used to calculate the propagation loss of the electromagnetic modes
accurately, which provides the photonics industry a reliable tool for the design
of more compact and efficient photonic devices. We illustrate and validate
the performance of our method through extensive numerical studies
and by comparison with semi-analytical results and previously published results.
\end{abstract}
\begin{keywords}
  Mode calculation, optical waveguide, optical fiber, hybrid mode,
  second kind integral equation formulation.
\end{keywords}

\pagestyle{myheadings}
\thispagestyle{plain}
\markboth{J. Lai and S. Jiang}
{SKIE Formulation for the Mode Calculation of Optical Waveguides}
\section{Introduction}
Optical fibers and waveguides are important building blocks of many photonic
devices and systems in telecommunication, data transfer and processing, and
optical computing. Indeed, most photonic devices consist of approximately
straight waveguides as input and output channels with complicated functional
structures between the two. Two main mechanisms by which the electromagnetic
wave can be confined in optical fibers or waveguides are total internal
reflection and photonic band gap guidance \cite{okamoto,book2}. Generally speaking,
when the refractive index of the core is greater than that of the surrounding
material, the light is confined in the core by total internal reflection;
when the (hollow) core has a smaller refractive index, confinement can be achieved
through photonic band gap guidance. In both cases, the propagating electromagnetic
modes of optical fibers and waveguides depend on physical parameters such as
the input light wavelength, refractive indices, and the geometry of the cross
section of fibers and waveguides. To reduce the cost of designing new photonic
devices, accurate and efficient simulation tools are in high demand in 
integrated photonics industry. The first step in the photonics simulation
is to compute a complete set of propagating modes accurately and efficiently
for optical fibers or waveguides.

There has been extensive research on the mode calculation of optical fibers
and waveguides and various numerical methods have been developed. These include
the effective index method \cite{marcatili,effectiveindex}, the plane wave
expansion method \cite{planewave1,planewave2,mitmpb}, the multipole expansion
method \cite{multipole1,multipole2,multipole3,multipole4,multipole5,tpwhite1,tpwhite2},
finite difference methods \cite{fd1,fd2,fd3,yee}, finite element methods
\cite{fem1,fem2,fem3,fem4,fem5,fem6,fem7}, boundary integral methods
\cite{bem1,bem2,bem3,bem4,leslie1,leslie2,srep,lu1,lu2,bie6,pone}, etc.
Here we do not intend to
review these methods in great detail, but note that the effective index method
is generally of low order making it difficult to calculate the propagation constant
to high accuracy; the plane wave expansion method implies an infinite periodic
medium; the multipole expansion method requires that each core
be of circular shape and that the cores be well separated from each other; finite
difference and finite element methods requires a volume discretization of 
the cross section in a truncated computational domain with some artificial
boundary conditions or perfectly matched layers imposed on or near
the boundary of the truncated domain. When optical fibers and waveguides consist
of many cores of arbitrary shape, these methods often need excessively large
amount of computing resource in order to accurately calculate the imaginary part of
the propagation constant, which is related to the propagation loss of the
electromagnetic modes and thus of fundamental importance for the design
purpose.

On the other hand, boundary integral methods represent the electromagnetic
fields via layer potentials which satisfy the underlying partial differential
equations automatically. One then derives a set of integral equations
through the matching of boundary conditions with the unknowns only on the
material interfaces. Thus the dimension of the problem is reduced by one and
complex geometries can be handled relatively easily. Among the aforementioned
work on boundary integral methods, \cite{leslie1,srep,lu1,pone} present
numerical examples with high accuracy for smooth geometries. In
\cite{srep} and \cite{pone}, the field components $E_z$ and $H_z$ (with $z$-axis
the longitudinal direction of the waveguide) are represented
via four distinct {\it single} layer potentials and the resulting system is
a mixture of first kind and singular integral equations; both authors apply
 the circular case as a preconditioner
to obtain a well conditioned system for smooth
boundaries. In \cite{leslie1}, $E_z$ and $H_z$ are represented via a proper
linear combination of {\it single} and {\it double} layer potentials in such
a way that the hypersingular terms are cancelled out. The resulting system
still contains the tangential derivatives of the unknown densities and layer
potentials and thus is not of the second kind. In \cite{lu1},
Dirichlet-to-Neumann (DtN) maps for $H_x$ and $H_y$ are
used to construct a system of two integral equations, where
each DtN map is in turn computed by a boundary integral equation with a
hypersingular integral operator and a method in \cite{hypersingular}
is applied to evaluate the DtN map to high accuracy for smooth boundaries.
While these methods are all capable of computing the propagation
constant to high accuracy for smooth cases, it is not straightforward to
extend them to treat nonsmooth cases such as standard dielectric
rectangular waveguides in integrated optics.

\begin{remark}
We would like
to remark here that \cite{srep} has a subsection titled ``Buried Rectangular
Dielectric Waveguide''. In that subsection, the authors approximate
the rectangular waveguide via a smooth super-ellipse and
compute the propagation constant for the super-ellipse. Though Fig.~2 in
\cite{srep} achieves about $9$ digit accuracy for the super-ellipse,
Fig.~3 in \cite{srep} shows only about two digit accuracy for the
propagation constant of the genuine rectangular waveguide which is
regarded as a limit of the super-ellipse.
\end{remark}

In this paper, we construct a system of SKIEs formulation
for the mode calculation of optical waveguides. Our starting point is the dual M\"{u}ller's
formulation \cite{muller}
for the time-harmonic Maxwell's equations in three dimensions.
We then reduce the dimension of the integration domain by one using
the key assumption of the mode calculation
that the dependence on $z$ of the electromagnetic fields and the unknown densities
is of the form $e^{i\beta z}$, where $\beta$ is the propagation constant of the mode.
Accordingly, the layer potentials defined on the surface of the waveguide are 
reduced to the layer potentials defined only on the boundary of the
cross section. And the boundary conditions lead to a system of four SKIEs where
each integral equation consists of a sum of a constant multiple of the identity
operator and several compact operators. All involved integral operators have
logarithmically singular kernels and thus are straightforward to discretize
with high order quadratures. Hence, our formulation leads to a numerical algorithm
which can handle the mode calculation of optical waveguides of arbitrary
geometries (smooth and nonsmooth) accurately and efficiently.

The paper is organized as follows. In Section 2, we develop the SKIE formulation.
We discuss briefly the discretization scheme and numerical algorithm for the mode
finding in Section 3. In Section 4, we illustrate the performance of our scheme
numerically and compare our results with semi-analytical results or
previously published results in \cite{leslie1,lu1,pone} for PCFs
with smooth holes. High accuracy results for dielectric rectangular
waveguides are presented in Section 5. And concluding remarks are contained
in Section 6.
\section{SKIE formulation for the electromagnetic mode calculation}
In this section, we first derive the SKIE formulation for the electromagnetic mode
calculation when the waveguide consists of only one core. The extension to the case
of multiple cores or holes is straightforward.

\begin{figure}[ht]
    \centering
        \includegraphics[width=0.5\linewidth]{./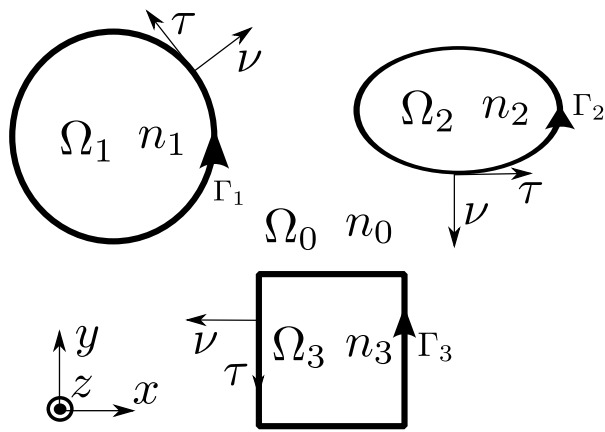}
        \caption{Cross section of an optical waveguide consisting of multiple cores}
    \label{Geometry1}    
\end{figure} 

\subsection{Notation}
We follow standard conventions in the mode calculation literature
and assume that electromagnetic fields are propagated along the $z$-axis, and that
the geometric structure of the optical waveguides is completely determined by its
cross section in the $xy$-plane (see Fig.~\ref{Geometry1} for an illustration). The waveguide consists
of several cores (or holes) denoted by $\Omega_1, \cdots, \Omega_N$ with refractive
indices $n_1, \cdots, n_N$, respectively. The surroundings of these cores or the cladding 
is denoted by $\Omega_0$ with refractive index $n_0$. The boundary of each core
is denoted by $\Gamma_i$ with $\nu$ the unit outward normal vector and $\tau$ the unit
tangential vector, respectively. We denote points in $\mathbb{R}^2$ by $P$ and $Q$.
\subsection{PDE formulation}
The source free Maxwell equations in each homogeneous region are given by:
\bea
\curl \bE&=&-\mu_0 \pht, \label{curle0}\\
\curl \bH&=&\epsilon_0 n^2 \pet, \label{curlh0}\\
\dive \bE&=&0, \label{dive}\\
\dive \bH&=&0, \label{divh}
\eea
where $n$ is the index of refraction of the region.

Assuming the time dependence of the electromagnetic field is $e^{-i\omega t}$,
i.e., $\bE(x,y,,t)=\bE(x,y,z)e^{-i\omega t}$ and $\bH(x,y,,t)=\bH(x,y,z)e^{-i\omega t}$,
and rescaling the magnetic field $\bH$ by $\sqrt{\frac{\mu_0}{\epsilon_0}}$,
equations \eqref{curle0}-\eqref{curlh0} become
\bea
\curl \bE - ik_v\bH = 0, \label{curle}  \\
\curl \bH + i\frac{k^2}{k_v}\bE = 0, \label{curlh}
\eea
where $k=k_vn$ is the wave number in the region,
$k_v=\omega/c$ is the wave number in vacuum, and
$c=\frac{1}{\sqrt{\epsilon_0\mu_0}}$ is the speed of light in vacuum. 
In the mode calculation, a further important assumption is that the electromagnetic
field takes the following form:
\bea
\bE(x,y,z)&=&\bE(x,y)e^{i\beta z}, \label{emode}\\
\bH(x,y,z)&=&\bH(x,y)e^{i\beta z}. \label{hmode}
\eea
We observe that with this assumption each component of the electromagnetic
fields of the mode satisfies the Helmholtz
equation in two dimensions:
\be\label{eeq} 
\left[\Delta + (k^2-\beta^2)\right] u = 0. 
\ee

On the material interface, the boundary conditions are that the tangential components
of the electromagnetic fields are continuous. That is,
\be
[\nu \times \bE]=0, \qquad [\nu \times \bH]=0,
\ee

where $[\cdot]$ denotes the jump across the material interface. It is clear
that $z$ and $\tau$ are two tangential directions for the waveguide geometry.
Thus the boundary conditions can be written explicitly as follows:
\be\label{modebc}
   [E_z]=0, \qquad [E_\tau]=0, \qquad
   [H_z]=0, \qquad [H_\tau]=0.
\ee
Finally, we would like to remark that combining the assumptions \eqref{emode},
\eqref{hmode} and Maxwell's equations, it is straightforward to verify that
there are only two independent components for the electromagnetic fields
in each region. For instance, the components $\ex$, $\ey$, $\hx$, $\hy$ are
completely determined by $\ez$ and $\hz$ via the following relations:
\be\label{hxey}
\begin{bmatrix}\hx\\ \ey\end{bmatrix} 
= \frac{-1}{k^2-\beta^2}\begin{bmatrix} ik^2/k_v & -i\beta \\
-i \beta & i k_v \end{bmatrix} 
 \begin{bmatrix}\py{\ez}\\ \px{\hz}\end{bmatrix}.
\ee
\be\label{hyex}
\begin{bmatrix}\ex\\ \hy\end{bmatrix} 
= \frac{1}{k^2-\beta^2}\begin{bmatrix} i\beta& ik_v \\
i k^2/k_v & i\beta \end{bmatrix} 
 \begin{bmatrix}\px{\ez}\\ \py{\hz}\end{bmatrix}.
\ee
In fact, \cite{leslie1,srep,pone} start from the integral representation of
$\ez$ and $\hz$ to develop the integral equation formulation for the mode
calculation.

%
%
%
\subsection{SKIE formulation}
We first provide an informal description about our construction of the SKIE
formulation for the mode calculation of optical waveguides.
Previous integral equation formulations in \cite{leslie1,srep,pone,lu1}
start from two scalar variables, i.e., two components of the electromagnetic
fields ($\ez$ and $\hz$ in \cite{leslie1,srep,pone} and $\hx$, $\hy$ in \cite{lu1})
and then set up the integral equations through the boundary conditions.
Here we start from the dual M\"{u}ller's representation in \cite{muller}
for the time harmonic electromagnetic fields which leads to an SKIE
formulation for the dielectric interface problems in three dimensions.
As an integral representation for three dimensional problem, the integration domain
is over the boundary $\partial \Omega$ and the unknowns are the surface currents
$\bJ$ and $\bM$ on $\partial \Omega$. For the mode calculation of an
optical waveguide, the boundary is an infinitely long cylinder, i.e.,
$\partial \Omega = \Gamma \times (-\infty,\infty)$ where $\Gamma$
is the boundary of the cross section of the waveguide. Since all electromagnetic
field components have $e^{i\beta z}$ dependence on $z$, it is natural to assume that
the unknown surface currents $\bJ$ and $\bM$ have the same dependence on $z$ as
well. Combining these two factors, we are able to reduce the dimension of the
representation by one and derive an integral representation of $\bE(x,y)$
and $\bH(x,y)$ using layer potentials defined only on $\Gamma$. It is
readily to verify that the boundary conditions lead to a system of SKIEs
for the mode calculation, with the propagation constant $\beta$ appearing
as a nonlinear parameter of the system.

\subsubsection{Reduction of layer potentials in the mode calculation}
The dual M\"{u}ller's representation \cite{muller} assumes that in each region
the time harmonic
electromagnetic field
$\bE(x,y,z)$ and $\bH(x,y,z)$ have the following representation:
\bea
\bE &=& \frac{1}{ik_v}\curl( \curl S^{k}_{\partial \Omega} [\bJ]) -\nabla \times S^{k} _{\partial \Omega} [\bM], \label{mullere0} \\
\bH &=& -\frac{1}{ik_v}\curl( \curl S^{k}_{\partial \Omega}[\bM]) - \frac{k^2}{k_v^2}\nabla \times S^{k} _{\partial \Omega}[\bJ]. \label{mullerh0}
\eea
Here $\partial \Omega$ is the boundary of the three dimensional domain,
$\bJ$ and $\bM$ are the unknown surface electric and magnetic currents, and 
\begin{equation}
  S^{k}_{\partial \Omega}[\bJ]
  (\br)  =\frac{1}{4\pi} \int_{\partial \Omega}  \frac{e^{ik|\br-\br'|}}{|\br-\br'|}
  \bJ(\br')d\br'
\end{equation}
is the single layer potential for the Helmholtz equation in 3D
with similar expression for $S^{k}_{\partial \Omega}[\bM]$. Obviously, 
$S^{k}_{\partial \Omega}[\bJ]$ satisfies the Helmholtz equation in 3D, i.e.,
$(\nabla^2+k^2)S^{k}_{\partial \Omega}[\bJ](\br)=0$ for $\br \in \Omega$.
It is straightforward to verify that \eqref{mullere0}-\eqref{mullerh0}
satisfy Maxwell's equations \eqref{dive}-\eqref{curlh}.

For the waveguide geometry, $\partial \Omega = \Gamma \times (-\infty,\infty)$.
Furthermore, since both $\bE(x,y,z)$ and $\bH(x,y,z)$ depend on $z$ in the form
of $e^{i\beta z}$, it is natural to assume that
the surface currents $\bJ(x,y,z)$ and $\bM(x,y,z)$ have the same
$z$ dependence as well, that is,  
\be\label{jmdef}
\bJ(x,y,z)=\bJ(x,y)e^{i\beta z}, \qquad \bM(x,y,z)=\bM(x,y)e^{i\beta z}.
\ee
We now introduce some notation to be used subsequently. We denote
$k_{\beta} = \sqrt{k^2-\beta^2}$
and the Green's function for the Helmholtz equation \eqref{eeq}
in 2D by $G(P,Q)=\frac{i}{4}H^{(1)}_0(k_{\beta}|P-Q|)$,
where $H^{(1)}_0$ is the zeroth order Hankel function of the first kind
\cite{handbook}. We further denote by $S$ the single layer potential operator
defined by the formula
\be
S[\sigma]=\int_\Gamma G(P,Q) \sigma(Q)ds_Q;
\ee
  $D$ the double layer potential operator defined by the formula
\be
D[\sigma]=\int_\Gamma \frac{\partial G(P,Q)}{\partial \nu(Q)} \sigma(Q)ds_Q;
\ee
and $T$ the anti-double layer potential operator
(for the lack of standard terminology) defined by the formula 
\be
T[\sigma]=\int_\Gamma \frac{\partial G(P,Q)}{\partial \tau(Q)}\sigma(Q)ds_Q.
\ee
The following lemma shows that the Fourier transform of the 3D Helmholtz
Green's function is the 2D Helmholtz Green's function.
\begin{lemma}\label{kernelreduction}
\begin{equation}\label{greeniden}
  \int_{-\infty}^{\infty} \frac{1}{4\pi} \frac{e^{ik|\br-\br'|}}{|\br-\br'|}
  e^{i\beta z'}dz'= G(P,Q)e^{i\beta z},
\end{equation}
where
$P=(x,y)$ and $Q=(x',y')$ are the projections
of $\br=(x,y,z)$ and $\br'=(x',y',z')$ onto the $xy$-plane, respectively, and the wavenumber $k_{\beta}$ in $G$ is chosen to have nonnegative imaginary part.
\end{lemma}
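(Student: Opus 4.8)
The plan is to use the translation invariance of the kernel in $z$ to factor out $e^{i\beta z}$, and then to reduce the remaining one-dimensional integral to a classical integral representation of the Hankel function. Writing $\rho=|P-Q|$ and $|\br-\br'|=\sqrt{\rho^2+(z-z')^2}$, I would first substitute $u=z'-z$; this pulls $e^{i\beta z}$ outside the integral and turns \eqref{greeniden} into the single $z$-independent identity
\be
\frac{1}{4\pi}\int_{-\infty}^{\infty}\frac{e^{ik\sqrt{\rho^2+u^2}}}{\sqrt{\rho^2+u^2}}\,e^{i\beta u}\,du=\frac{i}{4}H^{(1)}_0(k_{\beta}\rho),
\ee
in which the longitudinal variable has disappeared entirely.

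Next I would make the hyperbolic substitution $u=\rho\sinh t$, under which $\sqrt{\rho^2+u^2}=\rho\cosh t$ and $du=\rho\cosh t\,dt$, so the algebraic factor cancels and the integral collapses to $\int_{-\infty}^{\infty}e^{i\rho(k\cosh t+\beta\sinh t)}\,dt$. The key algebraic step is that a linear combination of $\cosh t$ and $\sinh t$ is again a single hyperbolic cosine: setting $\frac{k+\beta}{2}=\frac{k_{\beta}}{2}e^{t_0}$ and $\frac{k-\beta}{2}=\frac{k_{\beta}}{2}e^{-t_0}$, with $k_{\beta}=\sqrt{k^2-\beta^2}$ and $e^{2t_0}=(k+\beta)/(k-\beta)$, one obtains $k\cosh t+\beta\sinh t=k_{\beta}\cosh(t+t_0)$. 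After the shift $s=t+t_0$ the integral reduces to $\int_{-\infty}^{\infty}e^{ik_{\beta}\rho\cosh s}\,ds$, which equals $i\pi H^{(1)}_0(k_{\beta}\rho)$ by the standard (Sommerfeld-type) integral representation of the Hankel function. Dividing by $4\pi$ and restoring $e^{i\beta z}$ then yields \eqref{greeniden}.

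The main difficulty is analytic rather than algebraic, and it is concentrated in two places. First, the integrand on the real $u$-axis decays only like $|u|^{-1}$, so the integral is at best conditionally convergent; I would make everything absolutely convergent by giving $k$ a small positive imaginary part (a natural absorbing regularization), prove the identity in that regime, and remove the regularization by continuity. Second, and more delicately, when $k^2-\beta^2$ is not positive real the parameter $t_0$ is complex, so the shift $s=t+t_0$ moves the contour onto the horizontal line $\operatorname{Im}s=\operatorname{Im}t_0$ rather than the real axis. Returning it to the real axis requires Cauchy's theorem together with the vanishing of the two far vertical segments, and this is exactly where the stipulation that $k_{\beta}$ have nonnegative imaginary part is used: it guarantees $\operatorname{Im}(k_{\beta}\cosh s)\ge 0$ throughout the relevant strip, so that $e^{ik_{\beta}\rho\cosh s}$ decays as $\operatorname{Re}s\to\pm\infty$, no endpoint contributions survive, and the outgoing branch of the square root is the one that appears. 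I expect this contour-and-branch bookkeeping to be the only genuinely delicate part of the argument.

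As an independent confirmation, and as the conceptual reason the identity holds, I would observe that $w(P):=\int_{-\infty}^{\infty}g_3(\br,\br')\,e^{i\beta u}\,du$, with $u=z'-z$ and $g_3=\frac{e^{ik|\br-\br'|}}{4\pi|\br-\br'|}$ the free-space 3D Helmholtz Green's function, is precisely the partial Fourier transform of $g_3$ in the longitudinal variable. Since the 3D Laplacian $\Delta_3$ commutes with the $z'$-integration and $(\Delta_3+k^2)g_3=-\delta_3(\br-\br')$, integrating against $e^{i\beta z'}$ gives $(\Delta_3+k^2)\big(e^{i\beta z}w\big)=-\delta_2(P-Q)\,e^{i\beta z}$. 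Writing $\Delta_3=\Delta_2+\partial_z^2$ and using $\partial_z^2 e^{i\beta z}=-\beta^2 e^{i\beta z}$, the $e^{i\beta z}$ factors cancel and leave $(\Delta_2+k^2-\beta^2)w=-\delta_2(P-Q)$. Thus $w$ is a fundamental solution of the 2D Helmholtz operator with wavenumber $k_{\beta}$, and the outgoing radiation condition inherited from $g_3$, encoded in the branch choice $\operatorname{Im}k_{\beta}\ge 0$, selects $w=\frac{i}{4}H^{(1)}_0(k_{\beta}\rho)=G(P,Q)$ by uniqueness, in agreement with the explicit evaluation above.
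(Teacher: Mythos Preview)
Your argument is correct but proceeds differently from the paper. After the common first step of substituting in the longitudinal variable to pull out $e^{i\beta z}$, the paper simply invokes the Sommerfeld integral identity
\[
\frac{i}{4}H_0^{(1)}(k_s\sqrt{x^2+y^2})=\frac{1}{4\pi}\int_{-\infty}^{\infty}\frac{e^{-\sqrt{\lambda^2-k_s^2}\,y}}{\sqrt{\lambda^2-k_s^2}}\,e^{i\lambda x}\,d\lambda
\]
and matches it to the remaining integral via the complex substitution $\lambda=it$, $y=k$, $x=i\beta$, $k_s=|P-Q|$, together with an unspecified contour deformation. Your route instead uses the hyperbolic substitution $u=\rho\sinh t$ to reach the Schl\"afli-type representation $\int_{-\infty}^{\infty}e^{ik_\beta\rho\cosh s}\,ds=i\pi H_0^{(1)}(k_\beta\rho)$, with the addition formula $k\cosh t+\beta\sinh t=k_\beta\cosh(t+t_0)$ doing the work. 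Your version is more self-contained and makes explicit the analytic issues (conditional convergence, the complex contour shift, and precisely where the branch condition $\operatorname{Im}k_\beta\ge 0$ enters) that the paper's proof leaves implicit in the phrase ``appropriate contour deformation.'' Your second, PDE-based verification via the partial Fourier transform of the 3D Green's function is not in the paper and gives a clean conceptual explanation of why the identity must hold and why the outgoing branch is selected.
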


\begin{proof}
Let $|P-Q| = \sqrt{(x-x')^2+(y-y')^2}=a $, and $t = z-z'$. Then
\begin{equation}
  \frac{1}{4\pi} \int_{-\infty}^{\infty}  \frac{e^{ik|\br-\br'|}}{|\br-\br'|} e^{i\beta z'}dz'
  = e^{i\beta z} \frac{1}{4\pi} \int_{-\infty}^{\infty}
  \frac{e^{ik\sqrt{a^2+t^2}}}{\sqrt{a^2+t^2}} e^{-i\beta t}dt
\end{equation}
We now note that the well-known Sommerfeld integral identity (for $y>0$)
\cite{sommerfeld} is given by:
\begin{equation}
  \frac{i}{4}H_0^1(k_s\sqrt{x^2+y^2}) = \frac{1}{4\pi}
  \int_{-\infty}^{\infty} \frac{e^{-\sqrt{\lambda^2-k_s^2}y}}{\sqrt{\lambda^2-k_s^2}}e^{i\lambda x}d\lambda.
\end{equation}
\eqref{greeniden} follows from the substitution
$\lambda = it$, $y=k$, $x=i\beta$ and $k_s = a$
and appropriate contour deformation. 
\end{proof}

The following corollary reduces the single layer potential
$S^{k}_{\partial \Omega}$ in 3D to a single layer potential in 2D in the mode
calculation of an optical waveguide.

\begin{corollary}\label{srepreduction}
Suppose that $\partial \Omega = \Gamma \times (-\infty,\infty)$
and density $\sigma(x,y,z)=\mu(x,y)e^{i\beta z}$. Then
\be
S^{k}_{\partial \Omega}[\sigma](\br)=e^{i\beta z}S[\mu](P).
\ee    
\end{corollary}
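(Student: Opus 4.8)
The plan is to exploit the product structure of the cylinder $\partial\Omega = \Gamma \times (-\infty,\infty)$, which forces the three-dimensional single layer integral to factor into an integral over $\Gamma$ and an integral along the $z'$-axis; the latter is precisely the object evaluated in Lemma \ref{kernelreduction}. First I would parametrize a source point $\br' \in \partial\Omega$ as $(Q,z')$ with $Q=(x',y')\in\Gamma$ and $z'\in\mathbb{R}$, so that the surface element splits as $d\br' = ds_Q\,dz'$. Substituting the assumed density $\sigma(x',y',z')=\mu(Q)e^{i\beta z'}$ into the definition of $S^{k}_{\partial\Omega}$ then produces a double integral whose integrand is the product of the 3D Helmholtz kernel, the density $\mu(Q)$, and the phase $e^{i\beta z'}$.

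The next step is to interchange the order of integration and carry out the $z'$-integral first. Since $\br=(x,y,z)$ is fixed and $|\br-\br'| = \sqrt{|P-Q|^2+(z-z')^2}$ depends on $z'$ only through $z-z'$, the inner integral is exactly the left-hand side of \eqref{greeniden} with $P=(x,y)$ and $Q=(x',y')$. Lemma \ref{kernelreduction} then replaces it by $G(P,Q)e^{i\beta z}$. Pulling the $z$-independent factor $e^{i\beta z}$ out of the remaining integral over $\Gamma$ leaves $e^{i\beta z}\int_\Gamma G(P,Q)\mu(Q)\,ds_Q$, which by the definition of the 2D single layer operator is $e^{i\beta z}\,S[\mu](P)$, giving the claimed identity.

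The one step requiring care is the justification of the Fubini interchange, since the 3D kernel decays only like $1/|\br-\br'|$ along the $z'$-axis and the oscillatory factor $e^{i\beta z'}$ does not by itself restore absolute convergence. I expect this to be the main obstacle, and I would dispose of it by the same regularization already implicit in the proof of Lemma \ref{kernelreduction}: use the nonnegative imaginary part of $k_\beta$ to obtain exponential decay of the kernel in $z'$, thereby securing absolute convergence of the double integral over $\Gamma\times\mathbb{R}$, apply Fubini in that regularized setting, and then pass to the limit (equivalently, interpret the $z'$-integral as a conditionally convergent oscillatory integral and deform the contour as in the Sommerfeld identity). Once the interchange is legitimate, the remainder is the purely algebraic bookkeeping sketched above.
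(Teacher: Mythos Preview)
Your proposal is correct and follows exactly the paper's own argument: factor the surface integral over $\Gamma\times(-\infty,\infty)$ into an outer integral over $\Gamma$ and an inner $z'$-integral, apply Lemma~\ref{kernelreduction} to the inner integral, and identify the result with $e^{i\beta z}S[\mu](P)$. The paper's proof is a three-line computation that silently assumes the Fubini step you worry about, so your discussion of the interchange is more careful than the original but not a different approach.
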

\begin{proof}
\be
\ba
S^{k}_{\partial \Omega}[\sigma](\br)&=\int_\Gamma \mu(x',y')ds_\Gamma \int_{-\infty}^{\infty}
\frac{1}{4\pi} \frac{e^{ik|\br-\br'|}}{|\br-\br'|} e^{i\beta z'}dz'\\
&=e^{i\beta z}\int_\Gamma \frac{i}{4}H^{(1)}_0(k_{\beta}|P-Q|)\mu(Q)ds_Q\\
&=e^{i\beta z}S[\mu](P),
\ea
\ee
where the second equality follows from \eqref{greeniden}.
\end{proof}

\subsubsection{Integral representations of the electromagnetic field
  in the mode calculation}
Let $(\hat{i},\hat{j},\hat{k})$ be the basis of the Cartesian coordinate with $\hat{k}$ pointing along the positive $z$ direction. We write the unit tangent vector
in its component form $\tau = \tau_1 \hat{i}+\tau_2\hat{j}$.
Then the outward unit normal vector $\nu$ is given by
$\nu=\nu_1\hat{i}+\nu_2\hat{j} =\tau_2\hat{i}-\tau_1\hat{j}$. Since $\bJ$ and $\bM$
are unknown surface currents and $\tau$, $\hat{k}$ are two locally orthonormal
tangential vectors at a point $(x,y,z)\in \partial \Omega$,
we may write $\bJ(x,y)$ and $\bM(x,y)$ in \eqref{jmdef} for $(x,y)\in \Gamma$
as follows:
\be\label{jmdef2}
\ba
\bJ(x,y) &= J_\tau(x,y)\tau + J_z(x,y) \hat{k} 
         = J_\tau \tau_1 \hat{i} + J_\tau \tau_2 \hat{j} + J_z \hat{k},\\
\bM(x,y) &= M_\tau(x,y) \tau + M_z(x,y) \hat{k}
         = M_\tau \tau_1 \hat{i} + M_\tau \tau_2 \hat{j} + M_z \hat{k}.
\ea\ee
We now write down our integral representations for the electromagnetic field
in each region in the mode calculation of optical waveguides.
\begin{theorem} 
  [Integral representations of the electromagnetic field]\label{integralrep}
  Suppose that the unknown surface currents $\bJ(x,y,z)$ and $\bM(x,y,z)$
  are given in the form of \eqref{jmdef} and \eqref{jmdef2}.
  Then $\bE(x,y,z)$ defined in \eqref{mullere0} has the form
$\bE(x,y,z)=(E_x(x,y),E_y(x,y),E_z(x,y))e^{i\beta z}$, where $E_x$, $E_y$ and $E_z$
are defined via the formulas:
\be\label{mullere}\ba
E_x &= -\frac{1}{ik_v}
\frac{\partial}{\partial x}T[J_\tau]
+\frac{\beta}{k_v}
\frac{\partial}{\partial x}S[J_z] +\frac{k^2}{ik_v}
S[J_\tau\tau_1]
-\frac{\partial}{\partial y}S[M_z]+i\beta S[M_\tau \tau_2],\\
E_y &= -\frac{1}{ik_v}\frac{\partial}{\partial y}T[J_\tau]
+\frac{\beta}{k_v} \frac{\partial}{\partial y}S[J_z] +\frac{k^2}{ik_v}S[J_\tau\tau_2]
+\frac{\partial}{\partial x}S[M_z]-i\beta S[M_\tau \tau_1],\\
E_z &= -\frac{\beta}{k_v} T[J_\tau]
+\frac{(k^2-\beta^2)}{ik_v}
S[J_z]+D[M_\tau].
\ea\ee
Similarly, $\bH(x,y,z)=(H_x(x,y),H_y(x,y),H_z(x,y))e^{i\beta z}$, where
$H_x$, $H_y$ and $H_z$
are defined via the formulas:
\be\label{mullerh}\ba
H_x &= \frac{1}{ik_v}\frac{\partial}{\partial x}T[M_\tau]
-\frac{\beta}{k_v} \frac{\partial}{\partial x}S[M_z] -\frac{k^2}{ik_v}S[M_\tau\tau_1]
-\frac{k^2}{k_v^2}\frac{\partial}{\partial y}S[J_z]
+i\beta \frac{k^2}{k_v^2} S[J_\tau \tau_2],\\
H_y &= \frac{1}{ik_v}\frac{\partial}{\partial y}T[M_\tau]
-\frac{\beta}{k_v} \frac{\partial}{\partial y}S[M_z] -\frac{k^2}{ik_v}S[M_\tau\tau_2]
+\frac{k^2}{k_v^2}\frac{\partial}{\partial x}S[J_z]
-i\beta \frac{k^2}{k_v^2} S[J_\tau \tau_1],\\
H_z &= \frac{\beta}{k_v} T[M_\tau]
-\frac{(k^2-\beta^2)}{ik_v} S[M_z]+\frac{k^2}{k_v^2}D[J_\tau].
\ea\ee
\end{theorem}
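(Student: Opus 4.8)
The plan is to substitute the reduced single layer potentials from Corollary~\ref{srepreduction} into the M\"{u}ller representation \eqref{mullere0}--\eqref{mullerh0} and carry out the remaining vector differential operators component by component, for a target point $P$ off $\Gamma$ where all potentials are smooth. Applying Corollary~\ref{srepreduction} to each Cartesian component of the densities in \eqref{jmdef2}, I first record
\be
S^{k}_{\partial\Omega}[\bJ](\br)=e^{i\beta z}\bigl(S[J_\tau\tau_1],\,S[J_\tau\tau_2],\,S[J_z]\bigr),
\ee
evaluated at $P=(x,y)$, and likewise for $S^{k}_{\partial\Omega}[\bM]$ with $J$ replaced by $M$. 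Because every component carries the common factor $e^{i\beta z}$, the three-dimensional operator $\curl$ acting on such a field reduces to the operator obtained by replacing $\partial/\partial z$ by $i\beta$, so that all subsequent manipulations take place among the two-dimensional potentials $S$, $D$, $T$ on $\Gamma$.

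The computation rests on two kernel identities, which I would prove first. Using $\partial_{x_P}G(P,Q)=-\partial_{x_Q}G(P,Q)$ (and the analogue in $y$) to transfer the target derivatives onto the source variable, together with $\nu=(\nu_1,\nu_2)=(\tau_2,-\tau_1)$, the weighted combinations of $\partial_{x_Q}G$ and $\partial_{y_Q}G$ reassemble into the tangential and normal derivatives $\partial G/\partial\tau(Q)$ and $\partial G/\partial\nu(Q)$, yielding
\be\label{keyid}
\px{S[f\tau_1]}+\py{S[f\tau_2]}=-\,T[f],\qquad
\px{S[f\tau_2]}-\py{S[f\tau_1]}=-\,D[f]
\ee
for any density $f$ on $\Gamma$. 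The second identity delivers the double layer term: the reduced curl of $S^{k}_{\partial\Omega}[\bM]$ has $z$-component $\bigl(\curl S^{k}_{\partial\Omega}[\bM]\bigr)_z=e^{i\beta z}\bigl(\px{S[M_\tau\tau_2]}-\py{S[M_\tau\tau_1]}\bigr)$, whose negative is exactly $D[M_\tau]$, while the remaining transverse components of $-\curl S^{k}_{\partial\Omega}[\bM]$ reproduce the last two terms of $\ex$ and $\ey$ in \eqref{mullere}.

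For the double-curl term I would invoke the vector identity $\curl\curl\bA=\nabla(\dive\bA)-\Delta\bA$ with $\bA=S^{k}_{\partial\Omega}[\bJ]$, using that each Cartesian component of $\bA$ satisfies the three-dimensional Helmholtz equation, so $\Delta\bA=-k^2\bA$ off $\Gamma$. By the first identity in \eqref{keyid} the divergence collapses to $\dive\bA=e^{i\beta z}\bigl(-T[J_\tau]+i\beta S[J_z]\bigr)$; taking its gradient (again with $\partial/\partial z\to i\beta$) and adding $k^2\bA$, then multiplying by $1/(ik_v)$, produces precisely the $J$-dependent terms of $\ex$, $\ey$ and the combination $-\tfrac{\beta}{k_v}T[J_\tau]+\tfrac{k^2-\beta^2}{ik_v}S[J_z]$ in $\ez$. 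Adding the two contributions reproduces \eqref{mullere}.

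Finally, the magnetic formulas \eqref{mullerh} follow from the identical two steps applied to \eqref{mullerh0}, with $\bJ$ and $\bM$ interchanged and the scalar prefactors $-1/(ik_v)$ and $-k^2/k_v^2$ in place of $1/(ik_v)$ and $-1$; the identities \eqref{keyid} are used verbatim. I expect the main obstacle to be bookkeeping rather than conceptual: establishing \eqref{keyid} with the correct signs from the source/target derivative exchange and the identification $\nu=(\tau_2,-\tau_1)$, and then tracking the factors of $i$, $\beta$, $k$ and $k_v$ through the gradient-of-divergence step so that every coefficient in \eqref{mullere}--\eqref{mullerh} lands in place.
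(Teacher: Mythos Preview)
Your proposal is correct and follows essentially the same route as the paper: both reduce the 3D single layer potentials via Corollary~\ref{srepreduction}, handle the double curl through $\curl\curl\bA=\nabla(\dive\bA)+k^2\bA$, and collapse the divergence and the $z$-component of the curl to $-T[\cdot]$ and $-D[\cdot]$ respectively using exactly the source/target derivative exchange and the relation $\nu=(\tau_2,-\tau_1)$ that you isolate in \eqref{keyid}. The only cosmetic difference is that the paper derives those two kernel identities inline during the computation of $\curl S^{k}_{\partial\Omega}[\bJ]$ and $\dive(e^{i\beta z}S[\bJ])$, whereas you state them up front as a lemma.
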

\begin{proof}
By Corollary \ref{srepreduction}, we have
\be
S^{k}_{\partial \Omega}[\bJ]=e^{i\beta z}S[\bJ],  \qquad
S^{k}_{\partial \Omega}[\bM]=e^{i\beta z}S[\bM].  
\ee
Thus,
\be\label{sjrep}
S^{k}_{\partial \Omega}[\bJ] = e^{i\beta z}\left(S[J_\tau \tau_1]\hat{i}
+S[J_\tau \tau_2]\hat{j}+S[J_z]\hat{k}\right)
\ee
and similar expression holds for $S^{k}_{\partial \Omega}[\bM]$.
Since
\be
\ba
\curl S^{k}_{\partial \Omega}[\bJ] &= e^{i\beta z} \left\{
\left(\frac{\partial}{\partial y}S[J_z]-i\beta S[J_\tau \tau_2]\right)
\hat{i}
\right.\\
&+
\left(-\frac{\partial}{\partial x}S[J_z]+i\beta S[J_\tau \tau_1]\right)\hat{j}\\
&+\left.
\left(\frac{\partial}{\partial x}S[J_\tau\tau_2]
               -\frac{\partial}{\partial y} S[J_\tau \tau_1]\right)\hat{k}\right\}
\ea
\ee
and
\be
\ba
&\frac{\partial}{\partial x}S[J_\tau\tau_2]
-\frac{\partial}{\partial y} S[J_\tau \tau_1]\\
&\ =-\int_\Gamma \frac{\partial G(P,Q)}{\partial x'} J_\tau(Q)\tau_2(Q)ds_Q
 +\int_\Gamma \frac{\partial G(P,Q)}{\partial y'} J_\tau(Q)\tau_1(Q)ds_Q\\
&\ =-\int_\Gamma \frac{\partial G(P,Q)}{\partial x'} J_\tau(Q)\nu_1(Q)ds_Q
-\int_\Gamma \frac{\partial G(P,Q)}{\partial y'} J_\tau(Q)\nu_2(Q)ds_Q\\
&=-\int_\Gamma \frac{\partial G(P,Q)}{\partial \nu(Q)} J_\tau(Q)ds_Q
=-D[J_\tau](P),
\ea
\ee
we have
\be\label{curlj}
\ba
\curl S^{k}_{\partial \Omega}[\bJ] &= e^{i\beta z} \left\{
\left(\frac{\partial}{\partial y}S[J_z]-i\beta S[J_\tau \tau_2]\right)\hat{i}
\right.\\
&\left.
+\left(-\frac{\partial}{\partial x}S[J_z]+i\beta S[J_\tau \tau_1]\right)\hat{j}
-D[J_\tau]\hat{k}\right\}.
\ea
\ee
Using the identity $\curl(\curl \bA)=\nabla(\dive \bA)-\nabla^2\bA$, we have
\be\label{curl2j}
\ba
\curl(\curl S^{k}_{\partial \Omega}[\bJ]) &= \nabla(\dive S^{k}_{\partial \Omega}[\bJ])
-\nabla^2S^{k}_{\partial \Omega}[\bJ]\\
&= \nabla(\dive S^{k}_{\partial \Omega}[\bJ])+k^2S^{k}_{\partial \Omega}[\bJ]\\
&= \nabla(\dive (e^{i\beta z}S[\bJ])
)+k^2e^{i\beta z}S[\bJ],
\ea
\ee
and
\be
\ba
\dive (e^{i\beta z}S[\bJ]) &= e^{i\beta z}\left(
\frac{\partial}{\partial x}S[J_\tau \tau_1]
+\frac{\partial}{\partial y}S[J_\tau \tau_2]
+i\beta S[J_z]\right)\\
&=e^{i\beta z}\left(-T[J_\tau]+i\beta S[J_z]\right).
\ea
\ee
Thus,
\be\label{curl2j1}
\ba
\nabla(\dive (e^{i\beta z}S[\bJ]) &= e^{i\beta z} \left\{
\left(-\frac{\partial}{\partial x}T[J_\tau]
+i\beta \frac{\partial}{\partial x}S[J_z]\right)\hat{i}\right.\\
&+
\left(-\frac{\partial}{\partial y}T[J_\tau]
+i\beta \frac{\partial}{\partial y}S[J_z]\right)\hat{j}\\
&\left.+
\left(-i\beta T[J_\tau]
-\beta^2 S[J_z]\right)\hat{k}\right\}.
\ea
\ee
Substituting \eqref{sjrep}, \eqref{curlj} (with $\bJ$ replaced
by $\bM$), \eqref{curl2j}, \eqref{curl2j1} into \eqref{mullere0},
we obtain \eqref{mullere}. And \eqref{mullerh} can be obtained similarly.
\end{proof}
\begin{remark}
  Though $\beta$ is assumed to be real in Lemme \ref{kernelreduction},
  we will use \eqref{mullere}-\eqref{mullerh} to represent the electromagnetic
  field even when $\beta$ is complex for the mode calculation. In fact,
  it is straightforward (though a little tedious) to verify
  \eqref{mullere}-\eqref{mullerh} satisfy the relations \eqref{hxey}-\eqref{hyex}
  even when $\beta$ is complex. The hard part is to verify the following
  identities:
\be
\ba
\frac{\partial}{\partial y}T[M_\tau]+\frac{\partial}{\partial x}D[M_\tau]
&=-\nabla^2 S[M_\tau \tau_2] = (k^2-\beta^2)S[M_\tau \tau_2],\\
-\frac{\partial}{\partial x}T[J_\tau]+\frac{\partial}{\partial y}D[J_\tau]
&=\nabla^2 S[J_\tau \tau_1] = -(k^2-\beta^2)S[J_\tau \tau_1],\\
\ea
\ee
and other similar identities. In other words, we might start directly
from the representation \eqref{mullere}-\eqref{mullerh} for the mode calculation
without even mentioning the dual M\"{u}ller's representation, which only serves as
a formal derivation tool. 
\end{remark}
\subsubsection{Main theoretical result}
We are now in a position to derive the SKIE formulation for the mode calculation
of photonic waveguides. Since the boundary conditions for dielectric problems
are that the tangential
components of $\bE$ and $\bH$ must be continuous across the boundary.
We first combine the first two equations in \eqref{mullere} and \eqref{mullerh}
to obtain $E_\tau$ and $H_\tau$. Together with $E_z$ and $H_z$, we list all
four tangential components as follows:
\be\label{mullereh}
\ba
H_z &= \frac{k^2}{k_v^2}D[J_\tau]+\frac{\beta}{k_v} T[M_\tau]
+\frac{i(k^2-\beta^2)}{k_v} S[M_z],\\
H_\tau(P) &= i\beta\frac{k^2}{k_v^2} S[J_\tau(Q) \tau(P)\cdot \nu(Q)](P)
+\frac{k^2}{k_v^2} S_\nu[J_z](P)\\
&+i\frac{k^2}{k_v}S[M_\tau(Q)\tau(P)\cdot \tau(Q)](P)-\frac{i}{k_v}T_\tau[M_\tau](P)
-\frac{\beta}{k_v} S_\tau[M_z](P),\\
E_z &= -\frac{\beta}{k_v} T[J_\tau]
-\frac{i(k^2-\beta^2)}{k_v} S[J_z]+D[M_\tau], \\
E_\tau(P) &= -\frac{ik^2}{k_v}S[J_\tau(Q)\tau(P)\cdot \tau(Q)](P)+\frac{i}{k_v}T_\tau[J_\tau](P)
+\frac{\beta}{k_v} S_\tau[J_z](P)\\
&+i\beta S[M_\tau(Q) \tau(P)\cdot \nu(Q)](P)+ S_\nu[M_z](P).
\ea
\ee
We now scale and nondimensionalize the quantities in the above equation
by multiplying all length quantities with $k_v$. We have
\be\label{mullereh2}
\ba
\tilde{H}_z &= n^2D[J_\tau]
+n_e T[M_\tau]
+i(n^2-n_e^2) S[M_z],\\
-\tilde{H}_\tau(P) &= -in^2n_e S[J_\tau(Q) \tau(P)\cdot \nu(Q)](P)
-n^2 S_\nu[J_z](P)\\
&-in^2S[M_\tau(Q)\tau(P)\cdot \tau(Q)](P)+iT_\tau[M_\tau](P)
+n_e S_\tau[M_z](P),\\
\tilde{E}_z &= -n_e T[J_\tau]
-i(n^2-n_e^2) S[J_z]+D[M_\tau], \\
-\tilde{E}_\tau(P) &= in^2S[J_\tau(Q)\tau(P)\cdot \tau(Q)](P)-iT_\tau[J_\tau](P)
-n_e S_\tau[J_z](P)\\
&-in_e S[M_\tau(Q) \tau(P)\cdot \nu(Q)](P)
- S_\nu[M_z](P),
\ea
\ee
where $n_e=\beta/ k_v$ is the effective index.

For each region $\Omega_i$ ($i=0, 1$ for now),
we denote the single, double, and anti-double layer
potential operators in that region by $S_i$, $D_i$, and $T_i$, respectively.
We now summarize our main theoretical result in the following theorem.
\begin{theorem}
  [SKIE formulation for the mode calculation] \label{skieformulation}
  Suppose that $\bE(x,y)$ and $\bH(x,y)$ in each region $\Omega_i$ ($i=0,1$)
  are represented via \eqref{mullere}-\eqref{mullerh} with
  $k$, $S$, $D$, $T$ replaced by $k_i$, $S_i$, $D_i$, $T_i$, respectively.
  Then the densities $\bJ(x,y)$ and $\bM(x,y)$ defined in \eqref{jmdef2}
  satisfy the following (nondimensionalized) system of second kind
  integral equations:
\be\label{skie1}
(D+A(n_e))x=0,
\ee
where $x=[J_\tau\ J_z\ M_\tau\  M_z]^T$ is a $4\times 1$ block vector,
$D$ and $A$ are $4\times 4$ block matrices defined by the formulas
\be\label{skie2}
D = \begin{bmatrix} &\frac{n_0^2+n_1^2}{2}
  & & & &\\
  & & \frac{n_0^2+n_1^2}{2} & & & \\
  & & &1 & &\\
  & & & & 1 &\end{bmatrix},
\quad
A(n_e) = \begin{bmatrix} &A_{11}& 0 &A_{13}&A_{14}&\\
  &A_{21}&A_{22}&A_{23}&A_{24}&\\
  &-A_{13}&-A_{14}&A_{33}& 0 &\\
  &-A_{23}&-A_{24}&A_{43}&A_{44}&\end{bmatrix}.
\ee
Here the nonzero entries of $A$ are given by the following formulas:
\be\label{skie3}
\ba
A_{11} &= n_0^2D_0-n_1^2D_1, \qquad A_{22} = -n_0^2S_{0,\nu}+n_1^2S_{1,\nu},\\
A_{33} &= D_0-D_1, \qquad A_{44} = -S_{0,\nu}+S_{1,\nu}, \\
A_{21} &= -in_e\left(n_0^2S_0-n_1^2S_1\right)[\tau(P)\cdot \nu(Q) \cdot],\\
A_{43} &= -in_e(S_0-S_1)[\tau(P)\cdot \nu(Q)\cdot],\\
A_{13} &= n_e (T_0-T_1), \qquad A_{24} = n_e\left(S_{0,\tau}-S_{1,\tau}\right),\\
A_{14} &= i\left((n_0^2-n_e^2)S_0 -(n_1^2-n_e^2)S_1\right),\\
A_{23} &= i(T_{0,\tau}-T_{1,\tau})
-i(n_0^2S_0-n_1^2S_1)[\tau(P)\cdot \tau(Q) \cdot].
\ea
\ee
The effective index $n_e=\beta/k_v$ of the electromagnetic mode is
a complex number for which the above linear system has a nontrivial
solution.
\end{theorem}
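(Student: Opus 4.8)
The plan is to derive the system \eqref{skie1}--\eqref{skie3} by evaluating each of the four scaled tangential components in \eqref{mullereh2} as a boundary limit from inside each region and then enforcing the continuity conditions \eqref{modebc}. Concretely, the field in $\Omega_0$ is built from the operators $S_0,D_0,T_0$ (wavenumber $k_0$) and the field in $\Omega_1$ from $S_1,D_1,T_1$ (wavenumber $k_1$), but with one and the same set of densities $J_\tau,J_z,M_\tau,M_z$. Writing $[f]=f^{(0)}-f^{(1)}$ for the exterior-minus-interior limit and imposing $[\tilde H_z]=[\tilde H_\tau]=[\tilde E_z]=[\tilde E_\tau]=0$ produces four integral equations; the jump (identity) contributions assemble into the diagonal block $D$ of \eqref{skie2}, while the principal-value parts assemble into $A(n_e)$ of \eqref{skie3}.

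The crux is the correct classification of the operators in \eqref{mullereh2} according to their boundary behavior. I would first record the standard jump relations for the 2D Helmholtz layer potentials: the single layer $S$ is continuous across $\Gamma$, while the double layer and the normal-derivative (adjoint double layer) satisfy
\[
D^{(0)}=D+\tfrac12 I,\quad D^{(1)}=D-\tfrac12 I,\qquad S_\nu^{(0)}=S_\nu-\tfrac12 I,\quad S_\nu^{(1)}=S_\nu+\tfrac12 I,
\]
where $\nu$ is the outward normal pointing from $\Omega_1$ into $\Omega_0$. The essential observation is that every \emph{other} operator in \eqref{mullereh2} is continuous: the anti-double layer $T$ equals, after integration by parts on the closed curve $\Gamma$, a single layer acting on $-\partial_\tau\sigma$ (its kernel is a dipole tangent to $\Gamma$), so both $T$ and $T_\tau$ carry no jump; the tangential derivative $S_\tau$ of the single layer is continuous; and the single-layer operators with modified densities $\tau(P)\!\cdot\!\nu(Q)$ and $\tau(P)\!\cdot\!\tau(Q)$ are ordinary single layers and hence continuous. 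Thus only the $D[J_\tau]$ term in $\tilde H_z$, the $S_\nu[J_z]$ term in $\tilde H_\tau$, the $D[M_\tau]$ term in $\tilde E_z$, and the $S_\nu[M_z]$ term in $\tilde E_\tau$ can contribute identity terms.

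With this in hand the computation is pure bookkeeping, carried out one component at a time against the ordering $x=[J_\tau\ J_z\ M_\tau\ M_z]^T$. For $[\tilde H_z]=0$ the jumping term gives $n_0^2(D_0+\tfrac12 I)[J_\tau]-n_1^2(D_1-\tfrac12 I)[J_\tau]=(n_0^2D_0-n_1^2D_1)[J_\tau]+\tfrac{n_0^2+n_1^2}{2}J_\tau$, yielding $A_{11}$ together with the first diagonal entry $\tfrac{n_0^2+n_1^2}{2}$, while the continuous $T[M_\tau]$ and $S[M_z]$ terms pass through to $A_{13}=n_e(T_0-T_1)$ and $A_{14}$. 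The $-n^2 S_\nu[J_z]$ term of $[\tilde H_\tau]=0$ produces $A_{22}=-n_0^2S_{0,\nu}+n_1^2S_{1,\nu}$ and the second diagonal entry $\tfrac{n_0^2+n_1^2}{2}$, with the continuous terms giving $A_{21},A_{23},A_{24}$. Identically, $[\tilde E_z]=0$ and $[\tilde E_\tau]=0$ supply $A_{33}=D_0-D_1$ and $A_{44}=-S_{0,\nu}+S_{1,\nu}$ with unit diagonal entries, together with the off-diagonal entries $-A_{13},-A_{14},-A_{23},-A_{24},A_{43}$. Collecting the four equations is exactly \eqref{skie1}.

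I expect the main obstacle to be sign discipline rather than any deep difficulty: one must fix a single orientation convention (outward $\nu$, interior $\Omega_1$, exterior $\Omega_0$) and propagate it consistently through the jump relations and through the coefficients $\pm n_e,\pm i$ in \eqref{mullereh2}, so that the antisymmetric pattern of the lower-left block of $A$ in \eqref{skie2} (the entries $-A_{13},-A_{14},-A_{23},-A_{24}$) emerges correctly. A secondary point to address, in line with the Remark following Theorem \ref{integralrep}, is that although Lemma \ref{kernelreduction} was stated for real $\beta$, the representations \eqref{mullere}--\eqref{mullerh} remain valid solutions of the governing equations for complex $\beta$; since the jump relations are local and insensitive to the sign of $\operatorname{Im}k_\beta$, they persist, and the closing sentence---that a mode is a complex $n_e$ for which $D+A(n_e)$ has a nontrivial kernel---is then a definition of the mode rather than a claim requiring proof.
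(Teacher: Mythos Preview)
Your derivation of the system \eqref{skie1}--\eqref{skie3} follows the paper's own argument almost verbatim: write \eqref{mullereh2} in each region with $n_i,S_i,D_i,T_i$, take boundary limits, apply the jump relations for $D$ and $S_\nu$ to produce the diagonal block $D$, and collect the principal-value remainders into $A(n_e)$. Your bookkeeping and your identification of which operators jump and which do not are correct.

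There is, however, one component of the theorem that your proposal does not address: the assertion that the system is of the \emph{second kind}, i.e., that every entry $A_{ij}$ is a compact operator. The paper's proof devotes its second paragraph to this. For $S_i$ and the principal-value parts of $D_i$, $S_{i,\nu}$ this is standard potential theory. But the individual operators $T_i$, $S_{i,\tau}$ are Cauchy-singular (bounded but not compact), and each $T_{i,\tau}$ is hypersingular on its own. What rescues $A_{13}$, $A_{24}$, and $A_{23}$ is that they involve only the \emph{differences} $T_0-T_1$, $S_{0,\tau}-S_{1,\tau}$, $T_{0,\tau}-T_{1,\tau}$: the leading singularity of $G_0-G_1$ cancels, so these difference kernels are at worst logarithmic and the operators become compact. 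Your observation that ``$T$ and $T_\tau$ carry no jump'' is correct and is exactly what is needed to pin down the diagonal $D$, but continuity across $\Gamma$ is strictly weaker than compactness on $\Gamma$; you should add this cancellation argument to complete the proof of the second-kind claim.
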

\begin{proof}
  We first obtain the nondimensionalized tangential components 
  in $\Omega_i$ ($i=0,1$) by replacing $n$, $S$, $D$, $T$ in \eqref{mullereh2}
  with $n_i$, $S_i$, $D_i$, $T_i$, respectively. We then substitute
  the resulting expressions into the boundary conditions \eqref{modebc}
  and observe that the principal part of the linear system is exactly the
  matrix $A$ defined in \eqref{skie2}-\eqref{skie3}. 

  It is easy to see now that only $D$ and $S_\nu$ have nonzero jumps
  when the target point $P$ approaches the boundary and their jump relations lead
  to the diagonal matrix $D$ in \eqref{skie2} (see, for example, \cite{kress}).
  Furthermore, all entries $A_{ij}$ are compact operators due to following reasons.
  First, $S$, and the principal part of $D$ and $S_\nu$ are compact from standard
  potential theory \cite{kress}. Second, $T_0-T_1$ and $S_{0,\tau}-S_{1,\tau}$ are
  compact since their kernels are only logarithmically singular due to the cancellation of more singular terms. Third, $T_{0,\tau}-T_{1,\tau}$ is also compact since
  the hypersingular terms in difference kernel cancel out. Thus, the resulting
  system is of the second kind.
\end{proof}
\subsection{Extension to the waveguide with multiple holes}
When the waveguide consists of multiple holes, say, $\Omega_1, \cdots, \Omega_N$,
we simply represent $\bE(x,y)$ and $\bH(x,y)$ in each region $\Omega_i$
($i=1,\cdots,N$) via \eqref{mullere}-\eqref{mullerh} with
$k$, $S$, $D$, $T$, $\Gamma$ replaced by $k_i$, $S_i$, $D_i$, $T_i$, $\Gamma_i$,
respectively. For the exterior domain $\Omega_0$, we have similar
representations for $\bE(x,y)$ and $\bH(x,y)$, except that the boundary
$\Gamma$ is replaced by $\Gamma_0 = \Gamma_1 \cup \cdots \cup \Gamma_N$.
The boundary conditions on each $\Gamma_i$ lead to a $4N\times 4N$ block system.
We note that all $4\times 4$ diagonal blocks are similar to
\eqref{skie1}-\eqref{skie3} and that the entries in off-diagonal blocks
are all compact operators since their kernels are smooth due to the fact that
the target point $P$ is bounded away from the source curve. Thus, the system
is of the second kind.
\section{Discretization and numerical algorithms}
As pointed out in Theorem \ref{skieformulation}, all the integral operators in
\eqref{skie1}-\eqref{skie3} have kernels with logarithmic singularity. When
the boundary curves are smooth, there are many high-order quadrature rules
based on local modifications of the trapezoidal rule \cite{alpert,kapur}
or kernel splitting method \cite{sidi,kress_boundary_1991}. When the boundary
curves contain corners, recent developments in
\cite{helsing1,helsing2,helsing3,bremer1,bremer2,bremer3} treat such cases with
high order quadratures very efficiently. In order to achieve optimal efficiency
in discretization, one should treat smooth and nonsmooth cases separately and
implement the aforementioned schemes (say, \cite{alpert,bremer3}) accordingly.

Here we adopt a simpler scheme to discretize both smooth and nonsmooth cases.
We divide the boundary curve into $N_c$ chunks as follows. If the curve is smooth,
the chunks are of equal length in the parameter space. If the curve contains corners,
we will have finer and finer dyadic chunks toward the corner. On each chunk, the
unknown densities are approximated via a $p-1$th order polynomial and the set of
collocation points are the collection of the images of $p$
shifted and scaled Gauss-Legendre nodes on each chunk. When the target points
and the source points lie on the same chunk, we apply a precomputed generalized
Gaussian quadrature (see, for example, \cite{ggq1,ggq2,ggq3}) to discretize the
associated (logarithmically) singular integrals; and when the target points lie
outside the integration chunk, we simply use an adaptive Gaussian integrator to
compute the corresponding matrix entries. The total number of discretization points
is $N=N_c p$, where $N_c$ is the number of chunks, and the size of the resulting matrix $M(n_e)$ is $4N\times 4N$.

We apply the same method as in \cite{leslie1} to find the effective index $n_e$
such that $M(n_e)$ has a nontrivial nullspace. That is, we use M\"{u}ller's method
\cite{dmuller} to find the root of the function
\be\label{mullerfunction}
f(n_e) = \frac{1}{u^TM^{-1}(n_e)v},
\ee
where $u$ and $v$ are two fixed random column vector of length $4N$.

\section{Numerical examples for smooth boundaries}
\subsection{Example 1: optical fiber with a circular core}
For our first example, we consider the silica optical fiber consisting of a single circular core of radius
$50\mu m$.  At the incident wavelength $\lambda = 1500 nm$, the refractive index of the core is $n_1=1.4475$ while
that of the cladding is $n_0=1.444$. 
The effective index can be calculated independently by the multipole
method \cite{Lai:14} and our SKIE formulation. For this simple case,
the multipole method is semi-analytical and one only needs to solve
a $4\times 4$ system to find the effective index every time. Thus 
its results can be taken as the reference values for comparison purpose.
Table \ref{tab:NumericalResult1} presents the first five modes obtained
by our formulation with $50$ discretization points of the boundary and by
the multipole method, which shows IEEE double precision agreement of these two results.
We would like to remark here that our algorithm already achieves $13$ digit
accuracy with only $20$ discretization points. 
\begin{table}[htbp]
\begin{center}
\begin{tabular}{|c|c|c|c|} \hline
   \multicolumn{2}{|c|}{$n_e$ by SKIE}& \multicolumn{2}{|c|}{Reference value}       \\\hline
  Real& Imaginary & Real& Imaginary \\\hline
 1.444873245456804 &  0 & 1.444873245456804 & 0 \\\hline
 1.445573321563491  & 0 & 1.445573321563491 & 0 \\\hline
 1.445671696122978 & 0 & 1.445671696122978 & 0\\\hline
 1.446222363089593 & 0 & 1.446222363089593 & 0\\\hline
 1.447115413503111 & 0 & 1.447115413503111 & 0\\\hline
\end{tabular}
\end{center}
\caption{Effective index of Example 1.  The first column lists the values obtained by the SKIE formulation
  with $50$ discretization points of the circular boundary; while the second column lists the values obtained
  by the multipole method in \cite{Lai:14}. }
\label{tab:NumericalResult1}
\end{table}

\subsection{Example 2: PCFs with a hexagon ring and its perturbation}
\begin{figure}[ht]
    \centering
    \begin{subfigure}[t]{.40\linewidth}
        \centering
        \includegraphics[width=1.0\linewidth]{./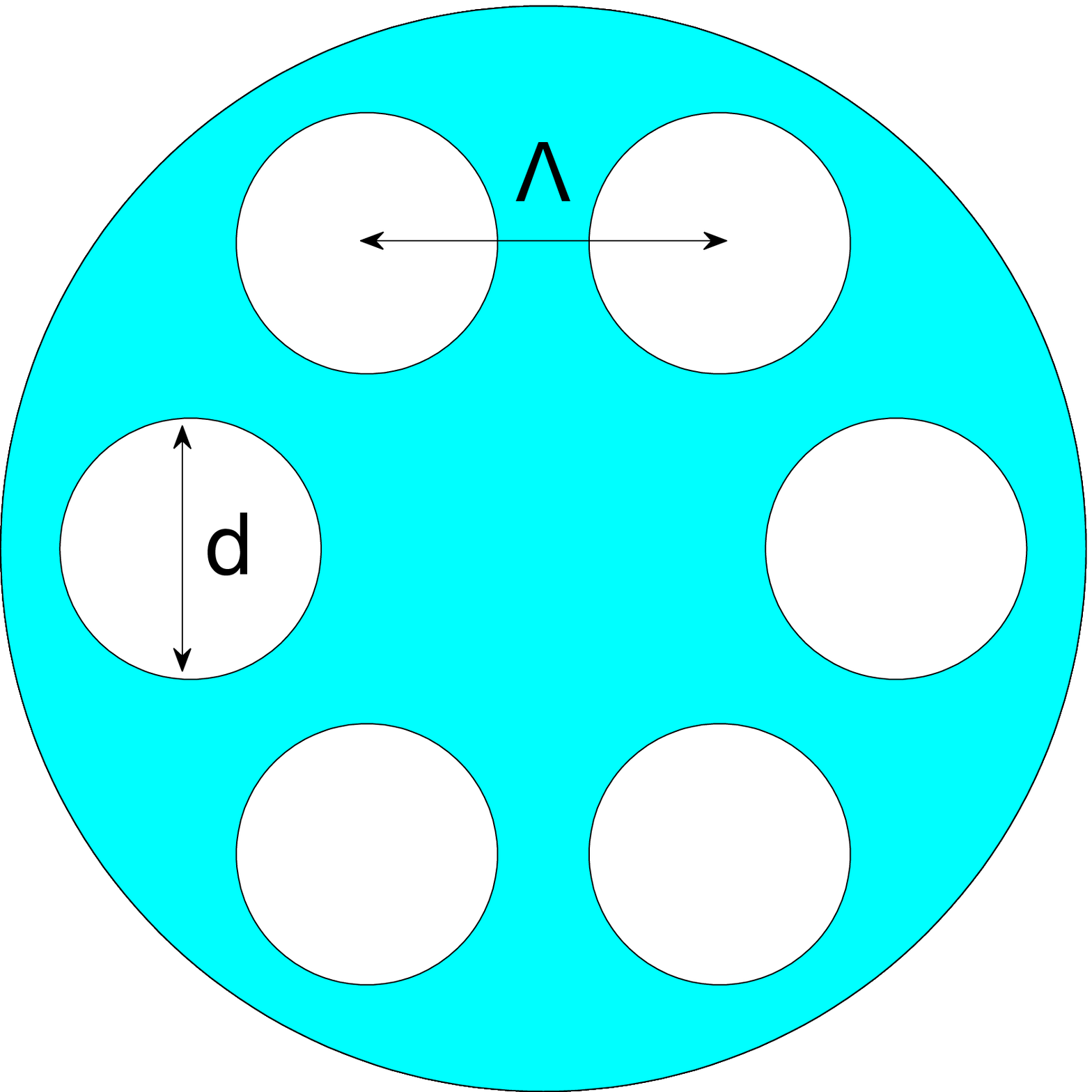}
        \caption{}
    \end{subfigure}
    \quad
    \begin{subfigure}[t]{.40\linewidth}
        \centering
        \includegraphics[width=1.0\linewidth]{./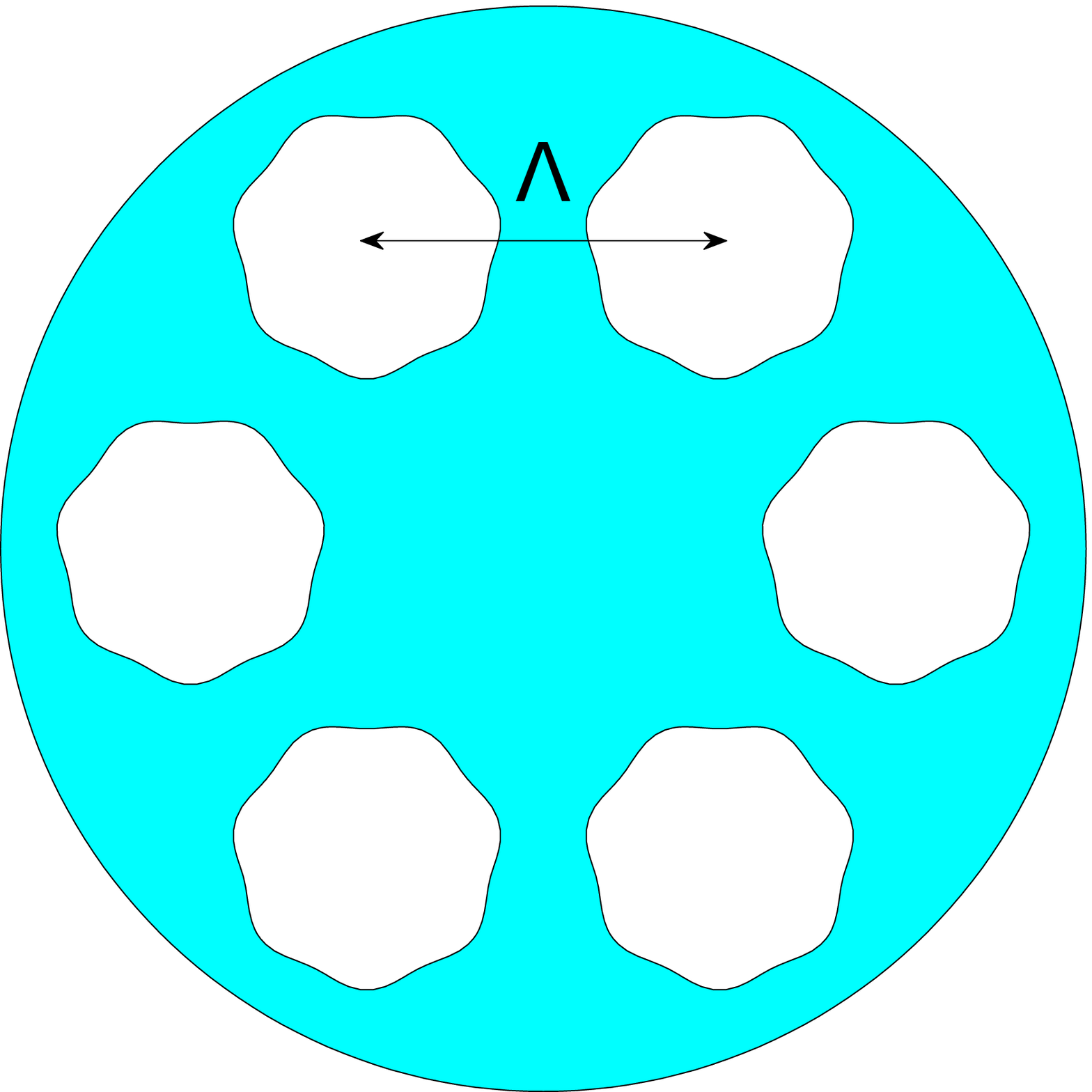}
        \caption{}
    \end{subfigure}
\caption{Geometry of the hexagon ring PCF (left panel) and its perturbation (right panel).}
    \label{hexagonring}
\end{figure}
In this example, we consider two PCFs studied in \cite{leslie1} and \cite{lu1}.
The first PCF consists of a hexagon ring of circular air holes
as shown in Figure \ref{hexagonring}(a). The six holes are equally spaced
along the hexagon with hole diameter $d=5\mu m$ and hole pitch $\Lambda=6.75\mu m$.
The refractive index of the glass matrix is assumed to be $n_0=1.45$ and that of the air
hole is $1$. The incident wavelength is $1450 nm $. The second PCF shown in Fig. \ref{hexagonring}(b)
is the perturbation of the first one, where the holes have the
parametrization: 
\begin{equation}\label{4.2}
||r(\theta)-c_i|| = \frac{d}{2}(1+h\sin{7\theta})
\end{equation} 
Here $r(\theta)$ is the boundary of the $i$-th perturbed circle centered at $c_i$ and the perturbation
level $h$ goes from $1\%$ to $6\%$. 

We discretize each boundary curve by $100$ points. Table 2 lists the
results obtained by our algorithm and the corresponding results in \cite{leslie1} for the first PCF.
We observe that our algorithm is able to recover the first $10$ digits in \cite{leslie1}
for all modes.  
\begin{table}[htbp]
\begin{center}
\begin{tabular}{|c|c|c|c|} \hline
  \multicolumn{2}{|c|}{$n_e$}      &  \multicolumn{2}{|c|}{$n_e$ in
    \cite{leslie1}} \\\hline
   Real& Imaginary &  Real& Imaginary \\\hline
   1.44539523214929 & 3.19452506E-8  & 1.445395232&3.1945E-8 \\\hline
   1.43858364729142 & 5.310787285E-7& 1.438583647 & 5.3108E-7\\\hline
   1.43844483196668 & 9.730851491E-7& 1.438444832 & 9.7308E-7\\\hline
   1.43836493417887 & 1.4164759939E-6& 1.438364934 & 1.41647E-6\\\hline
   1.43040909603339 & 2.15661649916E-5& 1.430409096 & 2.15662E-5\\\hline
   1.42995686266711&  1.59153224394E-5& 1.429956863 & 1.59153E-5\\\hline
   1.42924806251945 & 8.7312643348E-6 & 1.429248062 & 8.7313E-6\\\hline
\end{tabular}
\end{center}
\caption{Effective index of the PCF shown in Fig. \ref{hexagonring}(a).}
\label{tab:NumericalResult2}
\end{table}

Table \ref{tab:NumericalResult3} lists the results for the perturbed PCF.
Compared with Table 2 in \cite{leslie1}, we observe that the two results again
have at least $10$ digit agreement with each other.
\begin{table}[htbp]
\begin{center}
\begin{tabular}{|c|c|c|c|c|} \hline
  & \multicolumn{2}{|c|}{$n_e$ for mode 1}      &  \multicolumn{2}{|c|}{$n_e$ for mode 2} \\\hline
 h & Real& Imaginary &  Real& Imaginary \\\hline
1\%  & 1.44539377438717 &3.17269747E-8   &
       1.44539377640333  &      3.17256375E-8 \\\hline
2\% &  1.44538941232987  &3.10822988E-8   &
       1.44538942033048  &      3.10770686E-8 \\\hline
3\% &  1.44538217911076 & 3.00407424E-8   & 
       1.44538219687975      &  3.00293984E-8 \\\hline
4\%  & 1.44537215920563 & 2.86293848E-8 &
       1.44537212816737   &      2.86485611E-8 \\\hline
5\%  & 1.44535933076758 &2.69649433E-8   &
       1.44535937822583 &         2.69368186E-8 \\\hline
6\%  & 1.44534387291222 &2.50578633E-8  &
       1.44534393955980   &     2.50203139E-8 \\\hline
\end{tabular}
\end{center}
\caption{Effective index of the PCF shown in Fig. \ref{hexagonring}(b).}
\label{tab:NumericalResult3}
\end{table}
We have also studied the convergence rates for these two PCFs. The results are presented
in Fig.~\ref{Convergencefig2}, which shows the rapid convergence of our method. 
\begin{figure}[ht]  
\centering  
    \begin{subfigure}[t]{.45\linewidth}
        \centering
        \includegraphics[width=1.1\linewidth]{./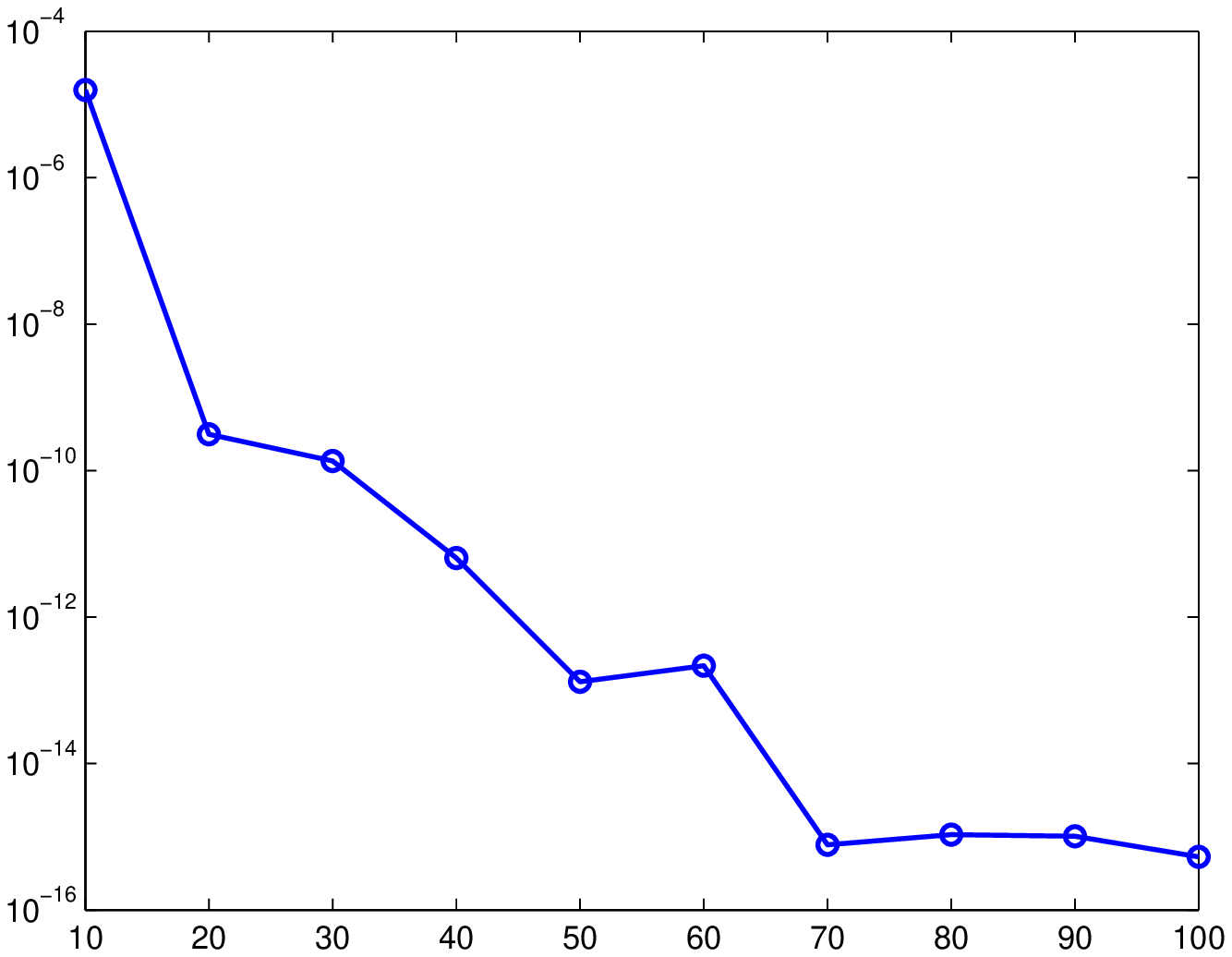}
        \caption{}
    \end{subfigure}
    \quad
    \begin{subfigure}[t]{.45\linewidth}
        \centering
        \includegraphics[width=1.1\linewidth]{./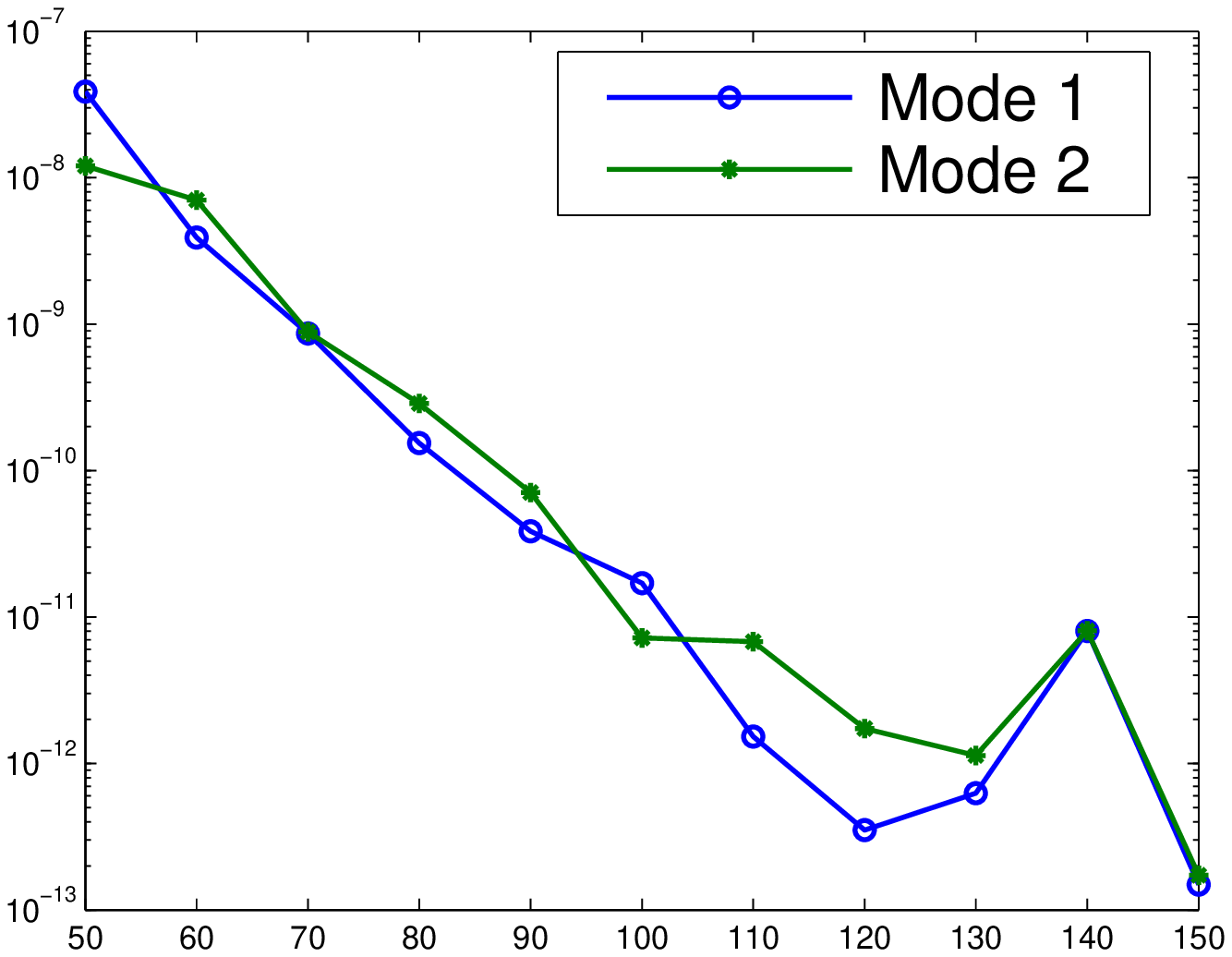}
        \caption{}
    \end{subfigure}
    \caption{Convergence study of the PCFs shown in Fig. \ref{hexagonring}.
      The $x$ axis shows the number of discretization
      points for each hole and the $y$ axis shows the relative error of the computed effective index. The reference
      value is obtained with $200$ points on each hole boundary.}
    \label{Convergencefig2}
\end{figure} 

\subsection{Example 3: PCFs with multiple-layer hexagon rings}
We now consider PCFs with multiple layers. The examples are taken from \cite{lu1,pone}.
The first PCF has
five layers of hexagon rings surrounding a circular core as shown in Figure
\ref{hexagonring2}(a). The small air holes are equally spaced with the
hole pitch $\Lambda = 2.74\mu m$ and the diameter $d=0.95\Lambda $. The diameter
of the core is $l= 2.5d$. The refractive indices of the glass surroundings and
the air hole are $1.45$ and $1$, respectively. The wavelength of the incident field
is $1510 nm$. The second PCF has an elliptic core with minor axis $2a = 2.3\mu m $
and major axis $2b = 4.6 \mu m$ surrounded by a three layers of hexagon rings.
The hole pitch of the small air holes is $\Lambda=2\mu m$ and the diameter of
each hole is $d=0.9\Lambda$. The incident wavelength for the second PCF is $\lambda = 1420 nm$.
These two PCFs are shown in Fig. \ref{hexagonring2}.
\begin{figure}[htbp]
    \centering
    \begin{subfigure}[t]{.45\linewidth}
        \centering
        \includegraphics[width=1.0\linewidth]{./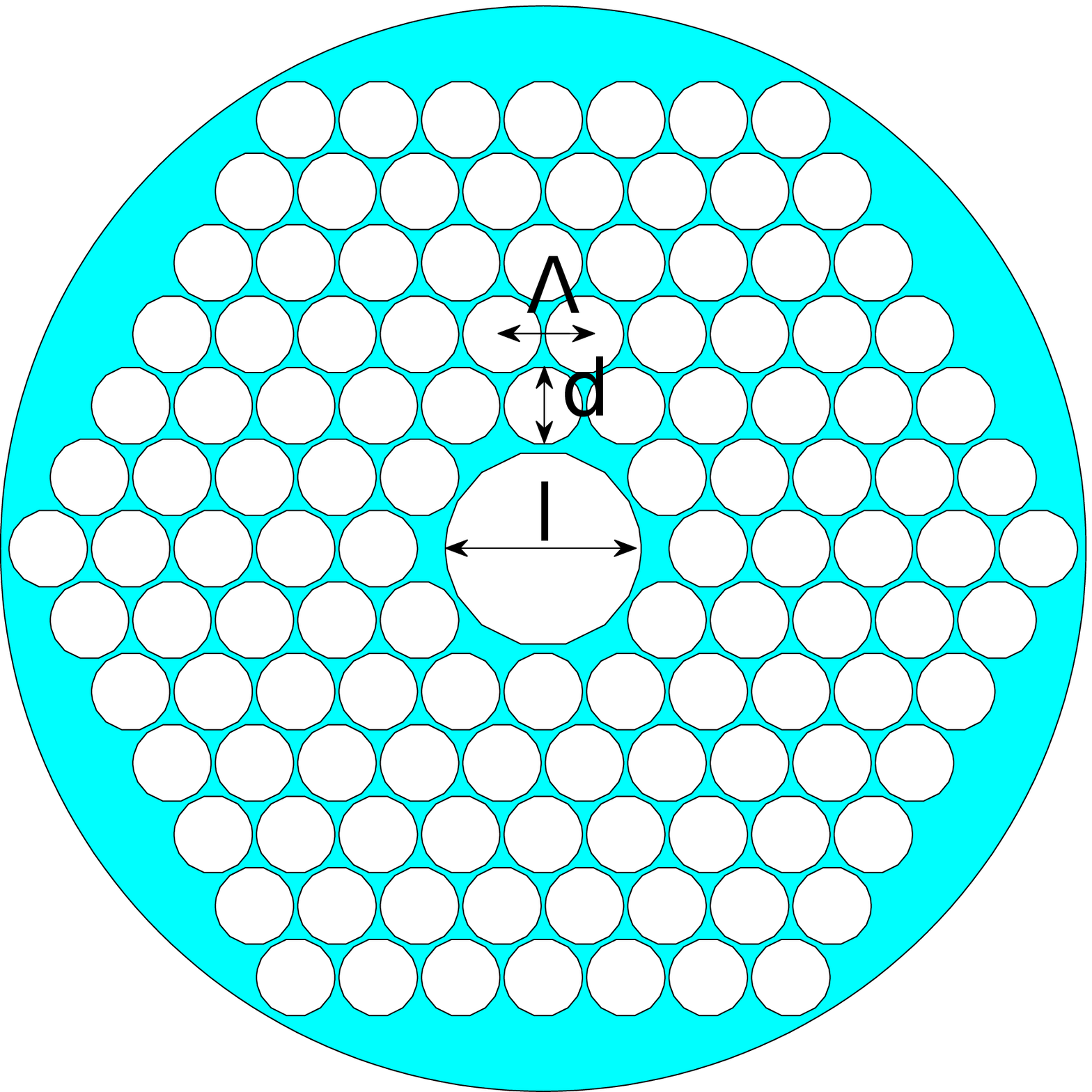}
        \caption{}
    \end{subfigure}
    \quad
    \begin{subfigure}[t]{.45\linewidth}
        \centering
        \includegraphics[width=1.0\linewidth]{./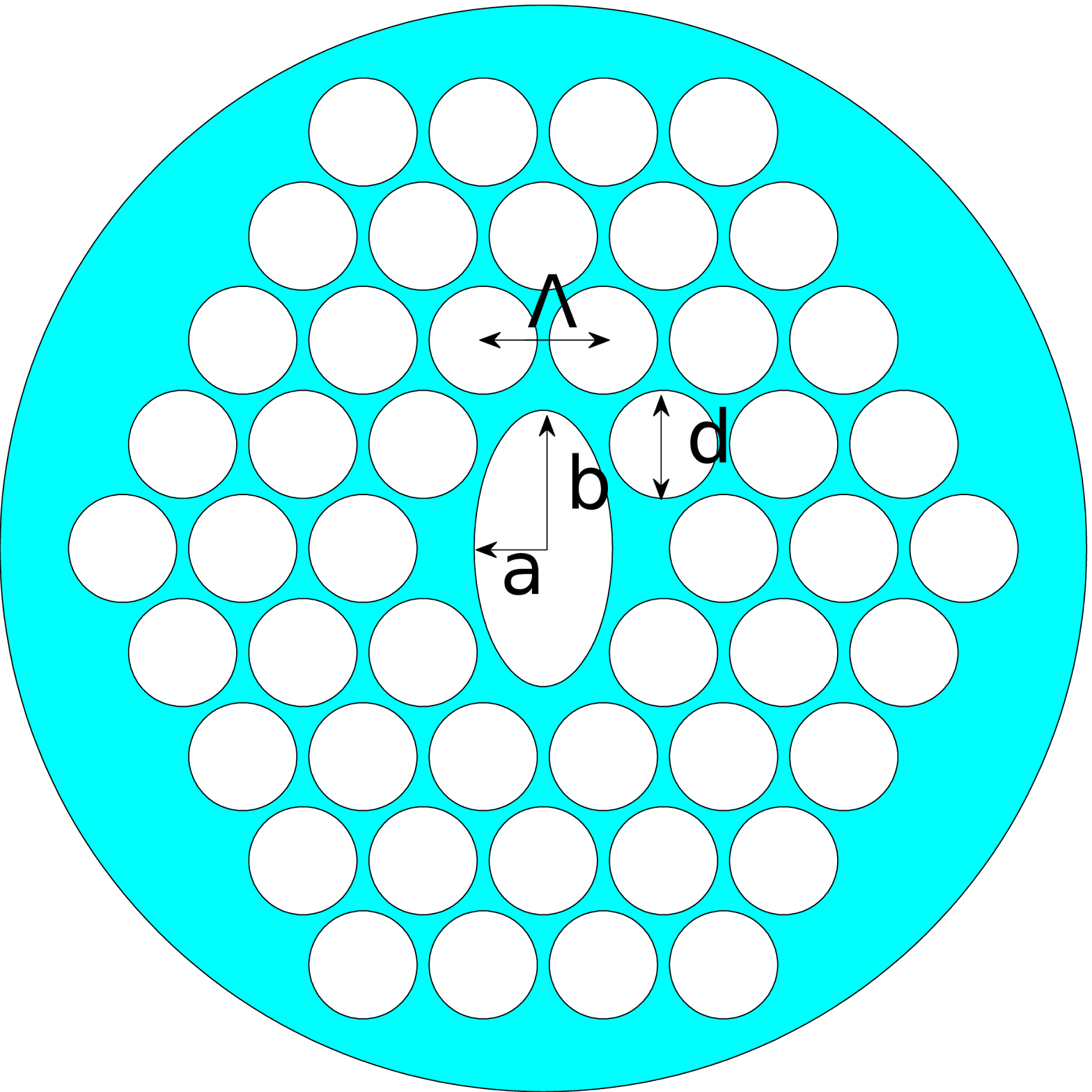}
        \caption{}
    \end{subfigure}
    \caption{Geometry of two PCFs. For the PCF shown in the left panel, parameters are given by
      $\Lambda = 2.74 \mu m$, $d = 0.95\Lambda$, $l= 2.5d$. For the PCF shown in the right panel (b),
      parameters are given by $\Lambda=2\mu m$, $d = 0.9\Lambda$, $a=1.15\mu m$, $b=2.3\mu m$.}
    \label{hexagonring2}
\end{figure}      

The results of our computation for these two PCFs are shown in Table \ref{tab:NumericalResult4}.
For comparison, we also list the corresponding results in \cite{lu1} in columns 4 and 5. The results
in \cite{lu1} are seen to be very accurate and there is a $13$ digit agreement between our results
and the results in \cite{lu1}. For the first PCF shown in Fig. \ref{hexagonring2}(a), we discretize each
small hole with $100$ points and the center hole with $200$ points; while for the second PCF shown in Fig.
\ref{hexagonring2}(b), we discretize each circular hole with $120$ points and the center
ellipse with $240$ points.
\begin{table}[htbp]
\begin{center}
\begin{tabular}{|c|c|c|c|c|} \hline
   & \multicolumn{2}{|c|}{$n_e$}      &  \multicolumn{2}{|c|}{$n_e$ in \cite{lu1}} \\\hline
& Real& Imaginary &  Real& Imaginary \\\hline
  \multirow{1}{*}{4(a)} & 0.9845160008345&3.41146823E-8 & 0.984516000835&3.41147E-8 \\\hline
 \multirow{2}{*}{4(b)} & 0.9390335474112 & 6.7418067299E-4  & 0.9390335474115 & 6.741806730E-4 \\\cline{2-5}
  & 0.9381625147574  & 2.2133063780E-3& 0.9381625147578   & 2.213306378E-3\\\hline
\end{tabular}
\end{center}
\caption{Effective index of the PCFs shown in Fig. \ref{hexagonring2}. The second and third columns list
  the values obtained by our method and the last
  two columns list the values in \cite{lu1}. 
}
\label{tab:NumericalResult4}
\end{table}
\section{Rectangular waveguides}
In this section, we present a detailed numerical study
on a high refractive index contrast silica waveguide. The cross section of the
waveguide is of the square shape with the side length equal to $3.4\mu m$.
The refractive index of the cladding is $n_0=1.4447$, while that of the core
is $2\%$ higher, i.e., $n_1=1.4447\times 1.02$. The wavelength of the incident
field is $1550nm$.
\subsection{Accuracy and conditioning of the SKIE formulation}
We first check the accuracy and conditioning of the SKIE formulation. For this,
we will solve the linear system $M(n_e)x = b$ with a nonzero right hand side
for some $n_e\in [n_0, n_1]$. We set $n_e = 1.451$ for testing purpose and we
construct artificial electromagnetic field by placing a point source outside
the square for the interior field and a point source inside the square for the
exterior field. We then obtain the right hand side vector $b$ by simply computing
the difference of the tangential components of these artificial electromagnetic
fields. After we have solved the linear system, we may use the representation
\eqref{mullere}-\eqref{mullerh} to evaluate the electromagnetic field both inside
and outside and compare the numerical result with the exact solution (which
is the artificial electromagnetic field by construction). We use GMRES to solve
the linear system and GMRES terminates when the relative residual is less than
$10^{-14}$.
\begin{table}[htbp]
\begin{center}
\begin{tabular}{|c|c|c|c|c|c|} \hline
   $N$            & 150 &  300 & 450 & 600 & 750 \\ \hline
   $N_{\text{iter}}$& 32  &   32 &  32 &  34 &  34 \\ \hline
\end{tabular}
\end{center}
\caption{Conditioning study of the SKIE formulation for the rectangular waveguide.}
\label{recttab1}
\end{table}
In Table \ref{recttab1}, the first row lists the number of discretization points on
each side of the square, while the second row lists the number of iterations needed in
GMRES. We observe that the number of iterations in GMRES is almost independent of the
size of the linear system, which is a characteristics of the SKIE formulation.

\begin{figure}[htbp]
  \centering
  \includegraphics[width=0.6\linewidth]{./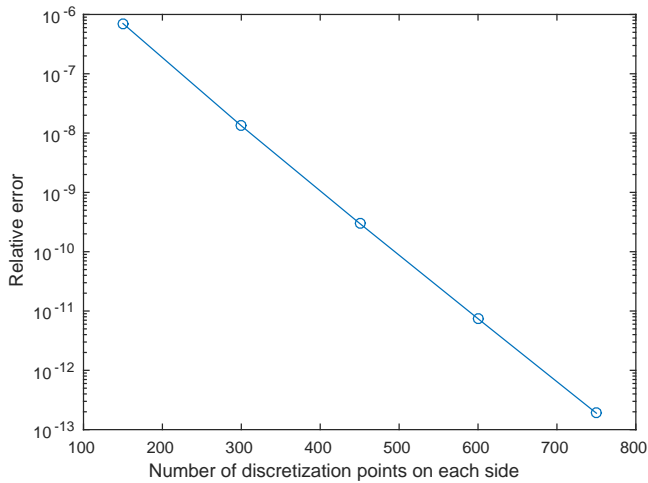}
  \caption{Convergence study of the rectangular waveguide. x-axis shows the number
    of discretization points on each side of the square; y-axis shows the relative
    error (in logarithmic scale) of the computed electromagnetic field against the
    exact value.
  }
\label{rectfig1}
\end{figure}

Figure \ref{rectfig1}
shows the relative error with various number of discretization points for each
side of the square. We observe that the order of convergence is about $10$, which
is in agreement with the theoretical value since the number of collocation points
on each chunk is set to $p=10$.

\subsection{Computation of the effective index}
It is known that this waveguide admits a single
propagation constant (or effective index) with double degeneracy. The
method in \cite{marcatili} produces an effective index approximately
equal to $1.458\cdots$. For comparison purpose, we have also implemented
the formulation in \cite{srep,pone} which uses four distinct single layer
potentials for the electromagnetic fields. 

\begin{table}[htbp]
\begin{center}
\begin{tabular}{|c|c|c|c|c|} \hline
  & \multicolumn{2}{|c|}{$n_e$ by SREP} &  \multicolumn{2}{|c|}{$n_e$ by SKIE} \\
  \hline
 N & Real& Imaginary &  Real& Imaginary \\\hline
150  & 1.45867110663907 &3E-10   &
       1.45860141500175  &      3E-9 \\\hline
300 &  1.45884317112249  &-2E-10   &
       1.45860141488787  &      6E-11 \\\hline
450 &  1.45890000315967 & 9E-14   & 
       1.45860141488572      &  1E-12 \\\hline
600  & 1.45887005720651 & -7E-6 &
       1.45860141488567   &      3E-14 \\\hline
\end{tabular}
\end{center}
\caption{Effective index of the rectangular waveguide. The first column lists
  the number of discretization points on each side of the square; the second and
  third columns list the real and imaginary parts of the effective index found via
  the formulation in \cite{srep,pone}; the fourth and fifth columns list the real
  and imaginary parts of the effective index found via our SKIE formulation.}
\label{recttab2}
\end{table}

Table \ref{recttab2} shows
the effective index $n_e$ found by the formulation in \cite{srep,pone} (denoted
by SREP in the table) and by our SKIE formulation for various number of
discretization points.
We observe that while the SKIE formulation exhibits
a consistent convergence behavior to about $13$ digit accuracy as $N$ increases,
the single layer representation behaves much more erratically and achieves
about only $4$ digit accuracy. This is because
that the resulting matrix from the discretization of the single layer representation
becomes more and more ill-conditioned as $N$ increases even when $n_e$ is
sufficiently far away from the root of the function defined in
\eqref{mullerfunction}.

\begin{table}[htbp]
\begin{center}
\begin{tabular}{|c|c|c|c|c|} \hline
   $N$                  &   150 &     300 &    450 &    600 \\ \hline
   $\kappa_{\text{SREP}}$ & 4.8E+5 & 3.4E+7 & 2.0E+9 & 1.0E+10 \\ \hline
\end{tabular}
\end{center}
\caption{Condition number of the formulation in \cite{srep,pone}
  for the rectangular waveguide.
  The first row lists the number of discretization points on each side of the
  square, while the second row lists the condition number of the resulting matrix.
  $n_e$ is set to $1.451$ as in Table \ref{recttab1}.}
\label{recttab3}
\end{table}
Table \ref{recttab3}
lists the condition numbers of the matrix $M_{\text{SREP}}$ for various
number of discretization points, which shows that the condition number of the matrix
increases very rapidly even when $n_e$ is sufficiently far away from the
root of the function in \eqref{mullerfunction}.
\begin{figure}[htbp]
  \centering
  \includegraphics[width=\linewidth]{./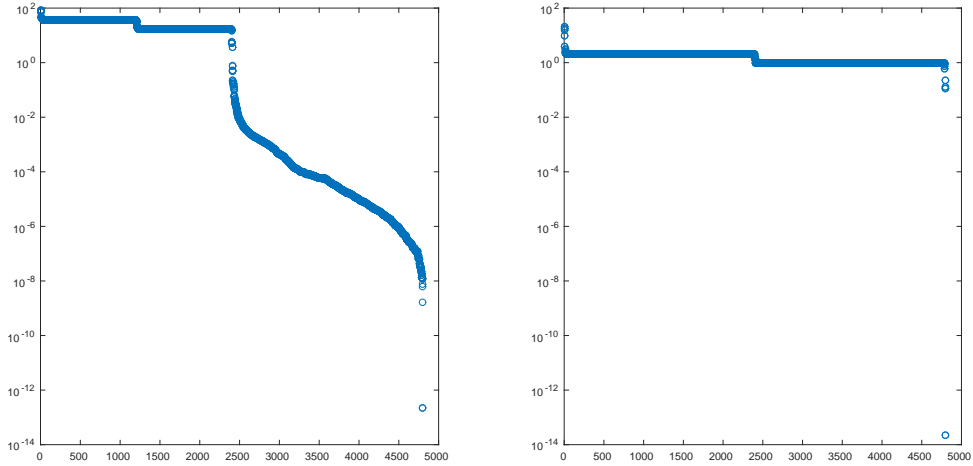}
  \caption{Singular value distribution of the matrices. The left panel shows
    the singular values of $M_{\text{SREP}}(n_e)$ when the size of the matrix is
    $4800$ and $n_e =1.45884317112249-2E-10i$; while the right panel shows
    the singular values of $M_{\text{SKIE}}(n_e)$ when the size of the matrix is
    $4800$ and $n_e=1.45860141488787 +6E-11i$.   
  }
\label{rectfig2}
\end{figure}

We have also carried out the singular value decomposition (SVD) for the
matrices $M_{\text{SREP}}$ and $M_{\text{SKIE}}$ at the effective indices
listed in Table \ref{recttab2} for $N=300$. The singular values are plotted
out in Figure \ref{rectfig2}. We observe that the SKIE formulation
has much cleaner singular value distribution. Indeed, the smallest two singular
values are both about $2.2\times 10^{-14}$, while the third smallest singular value
is about $0.1096$. On the other hand, the singular values of $M_{\text{SREP}}$
decreases almost continuously; the smallest two singular values are about
$2\times 10^{-13}$, while the third smallest singular value is about
$1.6\times 10^{-9}$. To sum up, the SKIE formulation enables us to find
the effective index accurately and robustly; while non-SKIE formulation
will either give low accuracy or lead to spurious propagation mode due to
ill-conditioning.

\section{Conclusions}
We have constructed a second kind integral equation formulation for the
mode calculation of optical waveguides or fibers. The resulting numerical
algorithm is capable of finding the propagation modes of optical waveguides
or fibers with an arbitrary number of cores or holes of arbitrary shape (smooth
or with corners). The algorithm is high order accurate so that it is capable
of computing the propagation constant (including its imaginary part which is
related to the propagation loss of the mode) with high fidelity.
The algorithm is robust and well-conditioned due to its SKIE formulation.
The algorithm is efficient since one only needs to discretize the material
interfaces. This enables practitioners in the integrated photonics industry
to have a reliable simulation tool for designing more compact and efficient
optical components or devices.
%
%
%
\section*{Acknowledgements}
The authors would like to thank Prof. Leslie Greengard at Courant Institute for helpful discussions.
\bibliographystyle{siam}
\bibliography{journalnames,fmm,photonics,qbx,reference_lj}

\end{document}